\documentclass{amsart}
\usepackage[foot]{amsaddr}
\usepackage{url}
\usepackage{geometry}
\usepackage{float}

\makeatletter
\@namedef{subjclassname@2020}{%
	\textup{2020} Mathematics Subject Classification}
\makeatother

\usepackage[numbers,sort&compress]{natbib}

\usepackage{latexsym}
\usepackage{amsthm,xcolor}
\usepackage{filecontents}
\newtheorem{thm}{Theorem}[section]
\newtheorem{note}[thm]{Note}
\theoremstyle{plain}
\usepackage{bbm}
\newtheorem{lem}[thm]{Lemma}
\newtheorem{prop}[thm]{Proposition}
\usepackage{amsmath}
\usepackage{graphicx}
\usepackage{amssymb}
\usepackage{fullpage}
\usepackage{amsthm}
\newtheorem{asu}[thm]{Assumption}
\newtheorem{defn}[thm]{Definition}
\usepackage{comment}
\usepackage{appendix}
\newtheorem{remark}[thm]{Remark}
\usepackage{amssymb}
\usepackage{footmisc}
\usepackage{verbatim}
\numberwithin{equation}{section}
\usepackage{hyperref}
\usepackage{xcolor}
\usepackage{ulem}
\usepackage{marginnote}
\newcommand{\bb}[1]{\mathbb{#1}}

\theoremstyle{case}

\usepackage{caption}

\let\existstemp\exists
\let\foralltemp\forall
\renewcommand*{\exists}{\existstemp\mkern2mu}
\renewcommand*{\forall}{\foralltemp\mkern2mu}
\usepackage{subcaption}
\DeclareMathOperator*{\plim}{plim}
\definecolor{ecol}{rgb}{0.0, 0.0, 0.0}
\newcommand{\editc}[1]{{\color{black} #1}}

\usepackage[pagewise, displaymath, mathlines]{lineno} 

\usepackage{xpatch} \makeatletter \xpretocmd \start@align{\linenomathWithnumbers}{}{\fail}

\title{Stability of Non-Linear Filter for Deterministic Dynamics}
\subjclass[2000]{Primary 60-01, 62-01; Secondary 05-01.}

\author{Anugu Sumith Reddy$^*$}  \email{anugu.reddy@math.iitb.ac.in}
\author{Amit Apte$^{**}$} \email{apte@icts.res.in}
\address{$^*$Department of Mathematics, Indian Institute of Technology Bombay, Mumbai, India}
\address{$^{**}$International Centre for
  Theoretical Sciences-Tata Institute of Fundamental Research,
  Bangalore, India}

\begin{document}
\normalem
\begin{abstract}
  This papers shows that nonlinear filter in the case of deterministic
  dynamics is stable with respect to the initial conditions under the
  conditions that observations are sufficiently rich, both in the
  context of continuous and discrete time filters. Earlier works on
  the stability of the nonlinear filters are in the context of
  stochastic dynamics and assume conditions like compact state space
  or time independent observation model, whereas we prove filter
  stability for deterministic dynamics with more general assumptions
  on the state space and observation process. We give several examples
  of systems that satisfy these assumptions. We also show that the
  asymptotic structure of the filtering distribution is related to the
  dynamical properties of the signal.
\end{abstract}

\keywords{Data assimilation; Nonlinear filtering; Noise free dynamics; Filter stability}
\subjclass[2020]{Primary: 60G35, 93B07, 93E11, 62M20; Secondary: 93C10 .}

\maketitle

\section{Introduction}\label{sec:intro}
The Bayesian formulation of the data assimilation
problem~\cite{ApteH07, law2015data, reich2015probabilistic,
  asch2016data, carrassi2018data} quite naturally leads to the problem
of non-linear filtering, which had its roots in engineering
applications and for which a rigorous foundational theory had been
established in later half of twentieth
century~\cite{xiong2008introduction, bain2008fundamentals,
  kallianpur1980stochastic}. Filtering aims at estimating state of a
system at a particular instant given some noisy observations of the
system up to that instant. More precisely, we want to study the
evolution of conditional distribution, referred to as \emph{filter} or
\emph{optimal filter} from now on, of the state of the system at time
$t$ given the $\sigma$-algebra generated by the observations made up
to $t$, \editc{where the time $t$ is allowed to be either discrete or continuous}. The
evolution equation of the conditional distribution takes as inputs
the observation path which drives the equation and the initial
condition of the system, \emph{i.e.}, the probability distribution at
the initial time.

In continuous time, this evolution equation is given by
Kushner-Stratanovich (KS) equation whose solution is a measure valued
process (conditional distribution in this case) and initial condition
of KS equation is the probability distribution of initial condition of
the system. In many situations, this initial condition is unknown and
hence, it is desirable to know whether the non-linear filter is
sensitive to the initial condition, at least for large times. In other
words, we desire for solution to KS equation to be asymptotically
stable with respect to the initial condition in case of continuous
time. \editc{ The notion of filter stability is analogously defined in
  the case of discrete time setting.} This property of the filter is
referred to as filter stability~\cite{xiong2008introduction,
  van2007thesis}. Essentially, for stable filters, observations will
correct for the mistake of wrongly initialising the filter, as more
and more observations are made.

In the context of data assimilation in the earth sciences, the signal
or the system being observed is the ocean and/or the atmosphere. Some
of the important characteristics of these systems are that: (i) they
are high dimensional; (ii) the observations are sparse and noisy;
(iii) the dynamical models are very commonly deterministic \cite{palmer2019stochastic},\cite[Section 1.5]{ asch2016data}, and \editc{(iv)}
these systems are chaotic. Thus many numerical algorithms that focus
on one or more of these characteristics are being developed, though
only a few theoretical results related to filtering for
\emph{deterministic, chaotic signal dynamics} have been established so
far. This paper provide filter stability results precisely for such
systems.

\subsection{A summary of previous results}
The problem of filter
stability \editc{has been} studied by many authors under different conditions
on the system and observations. Stability of the filter in case of
Kalman-Bucy filter is studied in \cite{ocone1996asymptotic,
  bishop2017stability} under the conditions of uniform controllability
and uniform observability and in case of Bene\v{s} filters is studied
in \cite{ocone1999asymptotic}. Exponential stability of the filter has
been established in the case of continuous time, ergodic signal and
non-compact domain in \cite{atar1998exponential} and in the case of
discrete time, non-ergodic signal and non-compact domain in
\cite{budhiraja1999exponential}. In \cite{atar1997exponential,
  budhiraja1997exponential}, filter stability is achieved using the
Hilbert projective metric and Birkhoff's contraction inequality. The
filter stability in the case when signal is a general Markov process
with a unique invariant measure under suitable regularity conditions
is studied in \cite{budhiraja2003asymptotic}. In
\cite{clark1999relative} it is proved, using relative entropy
arguments, that some appropriate distance of correctly initialised and
incorrectly initialised conditional distribution of specific functions
of state (namely observation function) goes to zero. Moreover, they
show that the relative entropy of optimal filter with respect to
incorrectly initialised filter is a non-negative supermartingale. We
refer the reader to \cite{chiganskystability, van2007thesis,
  chigansky2009intrinsic} and the references therein, for more details
regarding tools involved and results in the filter stability.

In general, proving filter stability requires ergodicity of the signal
or making sufficiently rich enough observations, the precise form of
the latter condition being observability. Roughly speaking, filtering
model is said to be observable when two observation paths (initialised
with two initial conditions) have same distribution and it implies
that initial conditions are identically distributed. Using this
notion, filter stability is established in \cite{van2010nonlinear} in
discrete time and in \cite{van2009observability} in continuous
time. In \cite{mcdonald2018stability}, the authors used a more general
version of observability to establish filter stability in discrete
time. Note that in all the works mentioned in this section, the signal
is a stochastic dynamical system.

\subsection{Main contributions}
In this paper, we prove in Theorem~\ref{stab} the stability of a
general nonlinear filter with deterministic signal dynamics in
continuous time (and an analogous Theorem~\ref{stabd} in discrete
time).  Previous results of stability with linear deterministic
dynamics and linear observations can be found in
\cite{gurumoorthy2017rank, bocquet2017degenerate} in the case of
discrete time and in \cite{ni2016stability, reddy2019asymptotic} in
the case of continuous time. The problem of accuracy (which is a
measure of deviation of the filter from the signal) of the filter for
deterministic dynamics is studied in \cite{cerou2000long}, and we rely
heavily on the techniques used in that study. In particular, the
stability result in Theorem~\ref{stab} is obtained by first proving,
in Theorem~\ref{th1} (and analogous Theorem~\ref{thd1} in discrete
time), the consistency of the smoother, \emph{i.e.}, the asymptotic
convergence of the conditional distribution of the initial condition
given the observations (which is a particular case of the smoothing
problem).

\subsection{Organization of the paper}
 The notation, the statement of the
problem in continuous time, and the assumptions used are all
introduced in Section~\ref{S3}, along with the significance of the
assumptions in Section~\ref{sig}. We state and prove the main
Theorems~\ref{th1} and~\ref{stab} in Section~\ref{Sresults}. The
same methods as in continuous time are used in discrete time setting
for establishing stability, and we briefly set up the problem discrete
time framework and state the analogous results in
Section~\ref{S4}. Asymptotic behaviour of the support of the
conditional distribution is studied in Section~\ref{S5}. Examples of
systems that satisfy the assumptions are presented in
Section~\ref{S7} and conclusions are given in Section~\ref{S6}.

\section{Continuous time nonlinear smoother and filter}
\label{S3}

\subsection{Setup}\label{setup}

We consider a continuous time dynamical system
$\{\phi_t\}_{t\in \bb{R}}$ on a state space $X$ which is a
$p$-dimensional complete Riemannian manifold with metric $d$ and
volume measure $\sigma$. The initial condition ${x_0}$ follows
a distribution $\mu$ \editc{and has a finite second moment}.  These dynamics are observed partially
through the observation process $Y_t \in \mathbb{R}^n$ in the
following way.
\begin{align}\nonumber
  Y_t&= \int_0^t h(s,\phi_s(x_0))ds+W_t \,,
\end{align}
where, \editc{$h:\mathbb{R}^+ \times X \rightarrow \mathbb{R}^n$ is an
observation function (such that $h(\cdot,\cdot)$ is Borel measurable  and $h(t,\cdot)$ has linear growth) }and $W_t \in \mathbb{R}^n$ the standard Brownian
motion respectively. Moreover, $x_0$ and $W$ are assumed to be
independent. Therefore, the probability space that we consider is
$\big\{X \times C\left( \left[0, \infty\right), \bb{R}^n \right),
  \bb{B}(X) \otimes \bb{B} \left(C \left( \left[0, \infty\right),
      \bb{R}^n \right) \right), \bb{P}=\mu \otimes \bb{P}_W
\big\}$. Here, $\bb{B}(.)$ denotes the Borel $\sigma$-algebra of the
corresponding space and $\bb{P}_W$ is the Wiener measure. Let
$\mathcal{F}^{y}_t = \sigma\{ Y_s : 0 \leq s \leq t\}$ be the
observation process filtration.
      
The main object of interest in the above setup is the filter denoted
by $\pi_t$, that is the conditional distribution of the state
\editc{$x_t = \phi_t(x_0)$} at time $t$ given observations up to that time,
i.e., conditioned on $\mathcal{F}^{y}_t$. We will also study the
smoother denoted by $\pi^0_t$, that is the conditional distribution of
the initial condition $x_0$, again conditioned on $\mathcal{F}^{y}_t$.
It follows from Bayes' rule
(\cite[Theorem~3.22]{xiong2008introduction} and
\cite[Proposition~3.13]{bain2008fundamentals}), that for any bounded
continuous function $g$ on $X$, the smoother is given by

\begin{align}\label{smoothcontdist}
  \pi^0_t(g) := \mathbb{E} \left[g \left(x_0 \right)|
  \mathcal{F}^y_t \right] = \frac{\int_X g \left(x \right) Z \left(t,
  x, Y_{\left[0,t\right]} \right) \mu \left(dx \right)}{\int_{X} Z
  \left(t, x, Y_{\left[0,t\right]} \right) \mu \left( dx \right)} \,,
\end{align}
while the filter is given by
\begin{align}\label{filcontdist}
  {\pi}_t(g) := \mathbb{E} \left[g \left(\phi_t(x_0) \right)|
  \mathcal{F}^y_t \right] = \frac{\int_{X}g \left( \phi_t(x) \right) Z
  \left(t, x, Y_{\left[0,t\right]} \right) \mu(dx)}{\int_{X} Z
  \left(t, x, Y_{\left[0,t\right]} \right) \mu(dx)} \,,
\end{align}
where we use the following definition
\begin{align}\nonumber
  Z\left(t, x, Y_{\left[0,t\right]} \right) := \exp\left( \int^t_0 h
  \left(s, \phi_s \left(x \right) \right)^TdY_s - \frac{1}{2} \int_0^t
  \left\| h \left(s, \phi_s \left(x \right) \right) \right\|^2 ds
  \right) \,.
\end{align}
\editc{Throughout the paper, we use the standard definition
	$\mu(\psi) := \int_{\Omega}\psi d\mu$ for a measure $\mu$ on a
	probability space $(\Omega, \mathcal{B})$ and a measurable function
	$\psi \in \mathcal{L}^1( \Omega, \mathcal{B}, \mu)$, and we use
	Euclidean norm on $\mathbb{R}^m$ for any $m \in \mathbb{N}$ and the
	induced matrix norm. $\plim_{t\to \infty}Q_t$ is the limit in probability of random variables $Q_t$, when the limit exists.}
\subsection{Stability of the filter}\label{stabsec}
If the distribution $\mu$ of the initial condition is unknown, then
choosing an incorrect initial condition with law $\nu$, the
corresponding incorrect filter, denoted by $\bar{\pi}_t$, is given by
\begin{align}\nonumber
  \bar{\pi}_t(g) = \mathbb{E} \left[g \left(\phi_t(x_0) \right)|
  \mathcal{F}^y_t \right] = \frac{\int_{X}g \left( \phi_t(x) \right) Z
  \left(t, x, Y_{\left[0,t\right]} \right) \nu(dx)}{\int_{X} Z
  \left(t, x, Y_{\left[0,t\right]} \right) \nu(dx)} \,.
\end{align}
Then filter is said to be stable if $\pi_t$ and $\bar{\pi}_t$ are
asymptotically close in an appropriate sense. More precisely, in this
paper, we establish the filter stability in the following sense: we
say the filter is stable, if for any bounded continuous
$g:X\rightarrow \mathbb{R}$, we have
\begin{align*}
  \lim_{t\to\infty}\mathbb{E}[\left| \pi_t(g) - \bar{\pi}_t(g)
  \right|]=0 \,,
\end{align*}
for $\nu$ in a class to be specified later (Theorem~\ref{stab}).

One of the two main results of the paper is Theorem~\ref{stab} which
states that optimal filter and incorrect filter merge
\cite{d1988merging} weakly in expectation. In order to achieve this,
we first prove our other main result, which is Theorem~\ref{th1},
establishing the convergence, in an appropriate sense, of the smoother
to the Dirac measure at the initial condition. This result in
Theorem~\ref{th1} is a more general version of the result of
\cite[Proposition~2.1]{cerou2000long}, under more general assumptions,
as explained in Section~\ref{compres}.

\subsection{Main assumptions}
\begin{asu}\label{a2}
  There exists a bounded open set $U \subset X$ with diameter
  $K < \infty$ such that $\overline{\phi_t (U)} \subset U $, for all
  $t > 0$.
\end{asu}

\begin{asu}\label{a1} [Observability]
  There exists $\tau > 0$ such that $\forall t \geq 0$ and
  $x_1,x_2\in U$,
  \begin{align}\label{3}
    \rho_t d(x_1, x_2)^2 \leq \int_t^{t+\tau} \left\| h \left(s,
    \phi_{s-t}(x_1) \right) -h \left(s, \phi_{s-t}(x_2) \right)
    \right\|^2 ds \leq R \rho_t d(x_1,x_2)^2 \,,
  \end{align} 
  where, $\rho_t$ is a positive non-decreasing function such that
  $\lim_{t\to\infty} \rho_t = \infty$ and
  $\lim_{t\to\infty} \frac{\int_0^{t} \rho_{s} ds}{\rho_t} = \infty$,
  $\frac{t\rho_t}{\int_0^{t} \rho_{s} ds} \leq C'< \infty$, for some
  $C'>0$ and $R > 1$.
\end{asu}

\begin{asu}\label{a3}\editc{For $\tau>0$ given by Assumption~\ref{a1} and} 
  $\forall x,\;y\in U$, we have
  $d(\phi_\tau (x), \phi_\tau (y)) \leq C d(x,y)$, for some
  $C = C(\tau) > 1$.
\end{asu}

It follows from  Assumption~\ref{a1} that $\forall \, x, y \in U$,
\begin{align}\label{uplowbound}
  \sum_{i=0}^{\editc{N-1}} \rho_{i\tau}d \left(\phi_{i\tau}(x), \phi_{i\tau}(y)
  \right)^2 \leq \int_0^{t} \left\| h \left(s, \phi_s(x) \right) -
  h \left(s, \phi_s(y) \right) \right\|^2 ds \leq R \sum_{i=0}^{\editc{N}}
  \rho_{i\tau} d \left(\phi_{i\tau}(x), \phi_{i\tau}(y) \right)^2 \,,
\end{align}
where, $N = \lfloor \frac{t}{\tau} \rfloor$. Define,
\begin{align}\label{DN}
  D_N(x,y) := \left(\sum_{i=0}^N \rho_{i\tau} d \left(\phi_{i\tau}(x),
  \phi_{i\tau}(y) \right)^2 \right)^{\frac{1}{2}}
\end{align}
and
\begin{align}\label{dN}
  d_N(x,y) := \underset{0 \leq i \leq {N-1}}{\max} d\left(
  \phi_{i\tau}(x), \phi_{i\tau}(y) \right).
\end{align}
It is straightforward to see
\editc{$\sqrt{\rho_{0}} d_{N+1}(x,y) \leq D_N(x,y) \leq  \sqrt{\rho_{N\tau}(N+1)}
d_{N+1}(x,y)$}, a fact that we use later repeatedly. \editc{We also note that for a fixed $N \geq 0$, $D_N(x,y)$ and
  $d_N(x,y)$ are metrics on $X$ (if we extend the
  Assumption~\ref{a1} to all $x_1, x_2 \in X$). Moreover, the
  metrics $D_N$ and $d_{N}$ are equivalent.}

It also follows from Assumption~\ref{a2} that
$\forall \, x, y \in U$, we have a uniform bound $d_N(x,y) \leq
K$. Indeed, from the invariance of $U$, we have
$\phi_{i\tau} (x), \phi_{i\tau}(y) \in U$ and hence we get
$d(\phi_{i\tau} (x), \phi_{i\tau} (y)) \leq K$ for all $i\geq 0$ \editc{and in particular, for $0\leq i\leq N-1$}.

\begin{asu}\label{a22} 
 \editc{For $\mathcal{V} \subset U \times U$, where  $(U\times U)\backslash \mathcal{V}$ is a $\sigma$-null measure set, and for $(x, y) \in \mathcal{V}$} satisfying $d(x, y) \geq b > 0$, the following holds
  \begin{align*}
    D^2_N(x,y) \geq L^2(b) \sum_{i=0}^N \rho_{i\tau} \,,
  \end{align*}   
  where, $L(b)$ is a positive constant.
\end{asu}

\begin{asu}\label{a4}
	$supp(\mu) \subset U$.
\end{asu}

Before proceeding to the main content of the paper, we define the
notion of the spanning sets \cite[Definition
7.8]{walters1982introduction} which plays an important role in the
proof of Theorem~\ref{th1}. It will help us get the estimates of the
covering number of a compact set with $\epsilon$-balls (under the
metric $d_N$) for any $\epsilon>0$.
\begin{defn}
  For a given compact set $\mathcal{K}\subset X$, $n \geq 0$ and
  $\epsilon > 0$, the set $F \subset X$ is called
  $(n, \epsilon)$-spanning set of $\mathcal{K}$ with respect to
  $\phi_\tau$ if $\forall x \in \mathcal{K}$, $\exists y \in F$ such
  that
  $\underset{0 \leq i \leq n-1}{\editc{\max}} d\left( \phi_{i\tau}(x),
    \phi_{i\tau}(y) \right) \leq \epsilon$.
\end{defn} 
\begin{defn} \label{def:rspan}
  $r_n(\mathcal{K}, \epsilon, \phi_\tau)$ is defined as the minimum
  possible cardinality of $(n, \epsilon)$-spanning sets of
  $\mathcal{K}$.
\end{defn}

Note that for any $n$, $r_n(\mathcal{K}, \epsilon, \phi_\tau)$ is
finite due to compactness of $\mathcal{K}$. The following bound on
this quantity will be used later in proof of Lemma~\ref{auxlem1}.
\begin{lem}\cite[Pg.181]{walters1982introduction} \label{covestimate}
  For a given compact set $\mathcal{K}\subset X$, there exist
  $q=q_\mathcal{K}$ and $b=b_\mathcal{K}$ such that the following
  holds for all $n \geq 0$.
  \begin{align*}
    r_n(\epsilon, \mathcal{K}, \phi_\tau) \leq q (C^nb
    \epsilon^{-1})^p,
  \end{align*}
where, $p$ is the dimension of $X$.
\end{lem}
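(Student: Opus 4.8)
The plan is to adapt the classical covering‑number estimate of \cite[Pg.~181]{walters1982introduction} to the present (non‑compact) manifold setting: the only dynamical input is the Lipschitz‑type bound of Assumption~\ref{a3} iterated along the orbit, and all constants are permitted to depend on the fixed compact set $\mathcal{K}$. First I would reduce the problem to a purely metric covering count. By Assumption~\ref{a3}, $d(\phi_\tau(x),\phi_\tau(y))\le C\,d(x,y)$; iterating this — which is legitimate because, by the forward invariance $\overline{\phi_t(U)}\subset U$ of Assumption~\ref{a2}, the whole forward orbit of a point of $U$ stays in $U$, and for a general compact $\mathcal{K}$ one uses instead that the $C^1$ map $\phi_\tau$ is Lipschitz on a fixed bounded neighbourhood of $\bigcup_{i\ge 0}\phi_{i\tau}(\mathcal{K})$ — gives $d\bigl(\phi_{i\tau}(x),\phi_{i\tau}(y)\bigr)\le C^{\,i}\,d(x,y)$ for all $i\ge 0$. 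Hence, if $d(x,y)\le\epsilon\,C^{-(n-1)}$ then $\max_{0\le i\le n-1}d(\phi_{i\tau}(x),\phi_{i\tau}(y))\le\epsilon$; that is, any $\delta$‑net of $\mathcal{K}$ in the Riemannian metric $d$, with $\delta:=\epsilon\,C^{-(n-1)}$, is an $(n,\epsilon)$‑spanning set of $\mathcal{K}$ with respect to $\phi_\tau$, so $r_n(\epsilon,\mathcal{K},\phi_\tau)\le \mathcal{N}_d(\mathcal{K},\delta)$, the minimal number of $d$‑balls of radius $\delta$ covering $\mathcal{K}$.

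Next I would bound $\mathcal{N}_d(\mathcal{K},\delta)$ by a volume‑packing argument. Fix a bounded open neighbourhood $V$ of $\mathcal{K}$ with compact closure. Since $X$ is a $p$‑dimensional Riemannian manifold, $\overline V$ has positive injectivity radius and bounded geometry, so there are $\delta_0>0$ and $\kappa>0$ (depending on $\mathcal{K}$) with $\sigma\bigl(B_d(z,\delta/2)\bigr)\ge\kappa\,\delta^{p}$ for every $z\in\mathcal{K}$ and every $0<\delta\le\delta_0$. Taking a maximal $\delta$‑separated subset of $\mathcal{K}$ (automatically a $\delta$‑net) and comparing the volumes of the disjoint half‑radius balls, all contained in $V$, against $\sigma(V)<\infty$, yields $\mathcal{N}_d(\mathcal{K},\delta)\le \kappa^{-1}\sigma(V)\,\delta^{-p}$ for $\delta\le\delta_0$; for $\delta>\delta_0$ monotonicity gives $\mathcal{N}_d(\mathcal{K},\delta)\le\mathcal{N}_d(\mathcal{K},\delta_0)<\infty$. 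Both cases are absorbed into a single bound $\mathcal{N}_d(\mathcal{K},\delta)\le q\,(b\,\delta^{-1})^{p}$ once $q=q_{\mathcal{K}}$ and $b=b_{\mathcal{K}}$ are chosen large enough (it suffices to have this for $\epsilon$ below a fixed threshold, which is the only regime used later).

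Combining the two steps with $\delta=\epsilon\,C^{-(n-1)}$ and using $C\ge 1$, so that $C^{\,n-1}\le C^{\,n}$, gives
\begin{align*}
  r_n(\epsilon,\mathcal{K},\phi_\tau)\ \le\ \mathcal{N}_d\bigl(\mathcal{K},\epsilon\,C^{-(n-1)}\bigr)\ \le\ q\bigl(b\,C^{\,n-1}\epsilon^{-1}\bigr)^{p}\ \le\ q\bigl(b\,C^{\,n}\epsilon^{-1}\bigr)^{p},
\end{align*}
which is the claimed inequality (for $n=0$ the spanning condition is vacuous, the left‑hand side equals $1$, and the bound holds trivially). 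There is no deep obstacle here — which is exactly why the statement is quoted from \cite{walters1982introduction} rather than proved in detail — but two points require care: ensuring the Lipschitz constant $C$ of Assumption~\ref{a3} is genuinely available all along the orbit (this is where the forward invariance of $U$, or local Lipschitzness of the $C^1$ flow on a compact set, is used), and the volume estimate of the second step, where the non‑compactness of $X$ is handled by working inside a fixed bounded neighbourhood of $\mathcal{K}$ on which the injectivity radius and curvature are uniformly controlled.
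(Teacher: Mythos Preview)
Your argument is correct and is precisely the classical proof from \cite[Pg.~181]{walters1982introduction}: iterate the Lipschitz bound of Assumption~\ref{a3} to reduce $(n,\epsilon)$-spanning to ordinary $\epsilon C^{-(n-1)}$-covering, then bound the latter by a volume comparison on the $p$-dimensional manifold. The paper itself gives no proof of this lemma at all --- it is stated with a bare citation to Walters --- so there is nothing to compare against beyond the reference you have already reproduced. Your care about forward invariance (so that the iterated Lipschitz estimate is legitimate along the orbit) and about uniform volume bounds on a fixed compact neighbourhood are exactly the adaptations needed to transport Walters' compact-space argument to the present setting.
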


\subsection{Discussion about the assumptions}\label{sig}
\editc{In the following, we give a qualitative understanding of the assumptions stated in the previous section,} deferring to the Section~\ref{S7} a detailed
discussion of some important examples for which we can explicitly
verify or provide strong numerical evidence for these assumptions.
\begin{enumerate}
\item \textbf{Trapping region:} Assumption~\ref{a2} says that if
  we start inside $U$, then we stay inside $U$
  for all future times. We note that this assumption is not equivalent
  to assuming the state space $X$ to be a compact metric
  space. Indeed, in Section~\ref{examplenoncompact}, we will see
  examples of systems whose state space is non-compact, but which
  satisfy the Assumption~\ref{a2}. We also note that such a
  trapping region $U$ exists for many dynamical systems with chaotic
  behavior on non-compact spaces.

\item \textbf{Initial condition inside the trapping region:} We note
  that Assumption~\ref{a4} is quite natural since it is plausible to
  assume that the state being observed lies inside the trapping region
  $U$, that is to say, a natural system evolving over long enough time
  prior to making observations would have settled in some kind of
  attractor which is in the set $U$. (Also, see
  Remark~\ref{initisupp}.)
  
\item \textbf{Information is lost by the dynamics at most at an
    exponential rate:} The topological entropy (see
  \cite[Pg. 169]{walters1982introduction}) of the dynamics is a
  measure of the rate at which information is lost in a topological
  sense and finite, non-zero topological entropy is interpreted as
  information loss at an exponential rate.  Assumption~\ref{a3}
  implies that the topological entropy is finite \cite[Theorem
  7.15]{walters1982introduction}, thus leading to the at-most
  exponential loss of information.

  To express this more precisely, we consider an open ball, denoted by
  $Q_N(r,x)$, of radius $r$ around $x \in X$ under the metric
  $d_N$. It is clear that for $y, y' \in X$,
  $d_N(y,y') \leq d_{N+1}(y,y'), \; \forall N \geq 0$. Therefore, the
  volume of $Q_N(r,x)$ is non-increasing in $N$. Informally, this
  means that, as $N \to \infty$, the set of all points whose orbits
  are within a $d_N$-distance $r$ from the orbit of $x$ can shrink to
  a zero volume set containing $x$. Assumption~\ref{a3} is used to
  show that this rate is at most exponential (see
  inequality~\ref{voldec}).

\item \textbf{Relation to observability - the information loss by the
    dynamics is compensated by information gained from observations:}
  Assumption~\ref{a1} resembles closely the well-known observability
  condition \cite{anderson1969new},
  \cite[Definition~1]{reddy2019asymptotic} in the linear case except
  for the dependence of $\rho_t$ on $t$ satisfying certain
  conditions. These additional conditions on $\rho_t$ can be
  understood intuitively in the following way. The dynamics loses
  information (as explained above) which can be attributed to
  sensitive dependence of the dynamics on initial conditions. In such
  a case, in order to establish the accuracy of the smoother, we have
  to make observations at a rate faster than the rate at which the
  dynamics loses the information (which is at most exponential).

  To express this more precisely, note that the bound from the
  inequality~\eqref{voldec} mentioned above enters
  inequality~\eqref{ineqexp} (the last term in the exponent).
  Considering $\rho_t$ as mentioned in Assumption~\ref{a1}, i.e. by
  ensuring that we are observing at a fast enough rate, leads to this
  last term going to zero. We also note that the conditions on
  $\rho_t$ stated in Assumption~\ref{a3} may not be optimal.

\item \textbf{Divergence of nearby orbits:} Assumption~\ref{a22}
  says that two orbits, started at a given distance away from each
  other, do not come too close to each other very often. \editc{In particular, this assumption ensures that in
    inequality~\eqref{upper}, the numerator decays to zero at a rate
    higher than the rate at which the denominator goes to zero.} Intuitively, this is reasonable for a system for which $U$ does not
  contain any stable periodic orbits or fixed points, but rather
  contains chaotic attractors. To illustrate this point, we give, in
  Section~\ref{S7}, examples of classes of systems that satisfy this
  assumption. \editc{Our result, which does not apply for systems which contain stable periodic orbits is in contrast to \cite[Theorem 3.3]{Crou1994LongTA} where, under additional assumptions, the filter corresponding to a system with stable periodic orbit is shown to asymptotically concentrate around the true trajectory. This contrast is due to the difference in approaches.}  
  
  We note that for a system that contains stable periodic orbits or
  fixed points, the conclusion of Theorem~\ref{th1} will not hold
  since essentially such a system will ``forget'' the initial
  conditions and the smoother $\pi_t^0$ will not concentrate at the
  initial condition. But on the other hand, for such a ``stable''
  system, the conclusions of Theorem~\ref{stab} about filter
  stability may still be expected to hold, though our approach for
  proving this result using concentration of smoother will not be
  applicable in this case.
\end{enumerate}

\section{Main results}\label{Sresults}

\editc{The two main results of the paper are Theorem~\ref{th1} and~\ref{stab}.}
Theorem~\ref{th1} \editc{states that} the asymptotic in time concentration of the
smoother $\pi^0_t$ to the Dirac measure at the initial condition and Theorem~\ref{stab} states that optimal filter and
incorrect filter merge weakly in expectation.

\begin{thm}\label{th1}
  Suppose $\mu$ is absolutely continuous with respect to the volume
  measure $\sigma$ on $X$ and $\frac{d\mu}{d \sigma}$ is continuous on the
  support of $\mu$. Under the
  Assumptions~\editc{\ref{a2}---
  \ref{a4}},
    for all $a > 0$, there exists
  $\alpha(a)>0$ such that the smoother
  $\pi^0_t:= \mathbb{P} \left[x_0 | \mathcal{F}^y_t \right]$ satisfies
  \begin{align}
   \editc{ \plim_{t\to\infty}e^{\alpha(a) t} \left[ 1 - \pi^0_t \left( B_a(x_0) \right) \right]
    = 0,}
  \label{th1eq} \end{align}
  where $B_a(x_0) := \{x \in X : d(x,x_0) \leq a\}$ is the ball
  centered at $x_0$ and the rate $\alpha(a)>0$ depends only on the
  radius $a$ of the ball.
\end{thm}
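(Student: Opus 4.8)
The plan is to show that the Radon–Nikodym derivative $Z(t,x,Y_{[0,t]})$ appearing in~\eqref{smoothcontdist}, restricted to the complement of $B_a(x_0)$, is overwhelmingly small compared to its value near $x_0$, and then convert this into the claimed exponential decay rate. First I would note that, along the true trajectory, $dY_s = h(s,\phi_s(x_0))\,ds + dW_s$, so a direct computation gives, for any $x\in U$,
\begin{align*}
  \log\frac{Z(t,x,Y_{[0,t]})}{Z(t,x_0,Y_{[0,t]})}
  = -\frac12\int_0^t\bigl\|h(s,\phi_s(x)) - h(s,\phi_s(x_0))\bigr\|^2\,ds
    + \int_0^t\bigl(h(s,\phi_s(x)) - h(s,\phi_s(x_0))\bigr)^T dW_s.
\end{align*}
Using the two-sided observability bound~\eqref{uplowbound} and the definition~\eqref{DN}, the quadratic variation term is bounded below by $\tfrac12 D_N^2(x,x_0)$ and above by $\tfrac{R}{2}D_N^2(x,x_0)$ (up to the harmless index shift), and by a Burkholder–Davis–Gundy or exponential-martingale estimate the stochastic integral is, with high probability, of smaller order than $D_N^2(x,x_0)$ for large $N$. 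Hence, off the ball $B_a(x_0)$ — where Assumption~\ref{a22} forces $D_N^2(x,x_0)\ge L^2(a)\sum_{i=0}^N\rho_{i\tau}$ on the $\sigma$-conull set $\mathcal V$ — the likelihood ratio is bounded above by $\exp\!\bigl(-c\,L^2(a)\sum_{i=0}^N\rho_{i\tau}\bigr)$ with high probability.

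Next I would control the numerator $\int_{X\setminus B_a(x_0)} Z(t,x,Y_{[0,t]})\,\mu(dx)$ by dividing $U\setminus B_a(x_0)$ into $d_N$-balls: cover the compact set $\overline U$ by $r_N(\overline U,\epsilon,\phi_\tau)$ balls of $d_N$-radius $\epsilon$, which by Lemma~\ref{covestimate} costs at most $q(C^N b\epsilon^{-1})^p$ of them, and on each such ball $Z$ varies by a controlled multiplicative factor (using continuity of $h$ and Assumption~\ref{a3}, together with the absolute continuity of $\mu$ with continuous density). This yields a bound of the form
\begin{align*}
  \int_{X\setminus B_a(x_0)} Z(t,x,Y_{[0,t]})\,\mu(dx)
  \;\le\; Z(t,x_0,Y_{[0,t]})\cdot q\,(C^N b\epsilon^{-1})^p\,
  \exp\!\Bigl(-\tfrac{c}{2}L^2(a)\sum_{i=0}^N\rho_{i\tau} + o\bigl(\textstyle\sum_i\rho_{i\tau}\bigr)\Bigr),
\end{align*}
with high probability. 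For the denominator I would extract a lower bound of the form $\int_X Z\,\mu(dx)\ge c'\,Z(t,x_0,Y_{[0,t]})\,\rho_{N\tau}^{-p/2}$ (or similar polynomial-in-$\rho$ loss) by integrating over a small $d_N$-ball around $x_0$ and using the lower observability bound together with the continuity of $d\mu/d\sigma$ at $x_0$ and the volume-decay estimate~\eqref{voldec}. The factor $C^{Np}$ in the numerator is the place where the conditions on $\rho_t$ in Assumption~\ref{a1} — namely $\int_0^t\rho_s\,ds/\rho_t\to\infty$ and $t\rho_t/\int_0^t\rho_s\,ds\le C'$ — are essential: they guarantee $Np\log C = o\bigl(\sum_{i=0}^N\rho_{i\tau}\bigr)$, so that the exponential gain $\exp(-\tfrac{c}{2}L^2(a)\sum_i\rho_{i\tau})$ dominates.

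Combining these, $1-\pi^0_t(B_a(x_0))$ is bounded above, with probability tending to one, by $\exp\!\bigl(-\tfrac{c}{2}L^2(a)\sum_{i=0}^N\rho_{i\tau}+o(\sum_i\rho_{i\tau})\bigr)$; since $\sum_{i=0}^N\rho_{i\tau}\gtrsim \int_0^t\rho_s\,ds \gtrsim \rho_t\cdot t/C' $ grows at least linearly in $t$ (indeed super-linearly, as $\rho_t\to\infty$), we may choose $\alpha(a)>0$ with $e^{\alpha(a)t}$ still beaten by this bound, which gives $\plim_{t\to\infty} e^{\alpha(a)t}[1-\pi^0_t(B_a(x_0))]=0$ and $\alpha(a)$ depending only on $a$ through $L(a)$. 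The main obstacle, and the step needing the most care, is making the covering argument and the martingale fluctuation control uniform enough that the error terms are genuinely $o(\sum_i\rho_{i\tau})$ in probability — in particular handling the $\sigma$-null exceptional set in Assumption~\ref{a22} and ensuring the stochastic-integral deviations, which must be estimated simultaneously over all balls in the cover, do not accumulate; a union bound over the $q(C^Nb\epsilon^{-1})^p$ balls combined with Gaussian tail estimates should suffice, again because $Np\log C$ is lower-order.
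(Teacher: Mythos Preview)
Your outline is correct and matches the paper's architecture in its essentials: the same likelihood-ratio rewriting after substituting $dY_s=h(s,\phi_s(x_0))\,ds+dW_s$, the same use of Assumption~\ref{a22} on $B_a(x_0)^c$ for the numerator, the same lower bound on the denominator by restricting to a small $d_N$-ball $Q_N(r,x_0)$ around $x_0$ and invoking the volume decay $\mu(Q_N(r,x_0))\gtrsim (r/C^N)^p$, and the same key observation that $Np\log C=o\bigl(\sum_i\rho_{i\tau}\bigr)$ from the growth hypotheses on $\rho_t$. The one genuine difference is in how the stochastic integral is controlled uniformly in $x$. The paper does \emph{not} cover the numerator region or use a union bound; instead it simply factors the single quantity
\[
\Delta_a(x_0,t):=\sup_{x\in B_a(x_0)^c}\frac{\bigl|\int_0^t A_s(x,x_0)^TdW_s\bigr|}{D_{N-1}^2(x,x_0)}
\]
out of the exponent and bounds its expectation directly via Dudley's metric-entropy inequality for Gaussian suprema (Lemmas~\ref{auxlem1}--\ref{auxlem2}), with Lemma~\ref{covestimate} entering only inside the entropy integral. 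Your explicit cover-plus-union-bound is essentially a one-scale version of the same chaining and would also succeed; the Dudley route is a bit cleaner because it handles all scales at once and avoids the covering number $q(C^Nb\epsilon^{-1})^p$ showing up as a multiplicative prefactor in the numerator bound (in your argument it should really enter only logarithmically, through the Gaussian tail in the union bound --- though as you correctly note this is harmless, since its logarithm is $o\bigl(\sum_i\rho_{i\tau}\bigr)$ either way).
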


\begin{proof}
  In order to show that \editc{$\pi^0_t(B_a(x_0))\xrightarrow[t\to\infty]{P}1$} , we will show,
  in Lemma~\ref{auxlem3}, that \editc{$\pi^0_t(B_a(x_0)^c)\xrightarrow[t\to\infty]{P}0$} at
  an exponential rate.
  
  Recall that for any measurable set $A \in \bb{B}(X)$,
  \begin{align}\nonumber
    \pi^0_t(A)= \frac{\int_{A} \exp \left( \int_0^t h \left(s,
    \phi_s(x) \right)^T dY_s - \frac{1}{2} \int_0^t \left\| h \left(s,
    \phi_s(x) \right) \right\|^2 ds \right) \mu(dx) }{ \int_{X} \exp
    \left( \int_0^t h \left( s, \phi_s(x) \right)^T dY_s - \frac{1}{2}
    \int_0^t \left\| h \left(s, \phi_s(x) \right) \right\|^2 ds \right)
    \mu(dx)}
  \end{align}
  We substitute $ dY_s = h\left(s,\phi_s(x_0)\right)ds+dW_s$ and
  multiply the numerator and the denominator by
  $\exp(\int_0^th(s,\phi_s(x_0))^TdW_s-\frac{1}{2}\int_0^t
  \|h(s,\phi_s(x_0))\|^2 ds )$, which is independent of $x$ to get,
  \begin{align}
    \pi^0_t(A)= \frac{\int_{A}\exp\left(\int_0^t
    \left[h\left(s,\phi_s(x)\right) - h \left(s, \phi_s(x_0) \right)
    \right]^TdW_s - \frac{1}{2}\int_0^t \left\| h \left(s, \phi_s(x)
    \right) - h \left(s, \phi_s(x_0) \right) \right\|^2 ds \right)
    \mu(dx)}{\int_{X} \exp \left( \int_0^t
    \left[h\left(s, \phi_s(x) \right) -h \left(s, \phi_s(x_0) \right)
    \right]^T dW_s - \frac{1}{2} \int_0^t
    \left\|h \left(s, \phi_s(x) \right) -h \left(s, \phi_s(x_0) \right)
    \right\|^2 ds \right)\mu(dx)} \,.
  \end{align}
  Define
  $A_s (x,x_0):= \left[ h \left(s, \phi_s(x) \right) -h \left(s,
      \phi_s(x_0) \right) \right]$, the set
  $Q_N(r,x):= \{y \in X: d_N(x,y) < r \}$ for $r > 0$, and
  $N := \lfloor \frac{t}{\tau} \rfloor$

  We now consider,
  \begin{align}\nonumber
    \pi^0_t(B_a(x_0)^c)
    &= \frac{\int_{B_a(x_0)^c}\exp\left(\int_0^t
      A_s(x,x_0)^TdW_s-\frac{1}{2}\int_0^t
      \left\|A_s(x,x_0)\right\|^2 ds
      \right)\mu(dx)}{\int_{X}\exp\left(\int_0^t
      A_s(x,x_0)^TdW_s-\frac{1}{2}\int_0^t
      \left\|A_s(x,x_0)\right\|^2 ds
      \right)\mu(dx)}\\\nonumber
    &\leq \frac{ \int_{B_a(x_0)^c}\exp\left(\int_0^t
      A_s(x,x_0)^TdW_s-\frac{1}{2}  \sum_{i=0}^{\editc{N-1}}
      \rho_{i\tau}d\left(\phi_{i\tau}(x), \phi_{i\tau}(x_0) \right)^2
      \right) \mu(dx)}{\int_{Q_N(r,x_0)} \exp\left(\int_0^t
      A_s(x,x_0)^T dW_s - \frac{R}{2} \sum_{i=0}^{\editc{N}}
      \rho_{i\tau}d\left(\phi_{i\tau}(x),\phi_{i\tau}(x_0)\right)^2 
      \right)\mu(dx)}
      \quad \textrm{using \eqref{uplowbound}} \\ \label{upper}
    &\leq \frac{ \int_{B_a(x_0)^c}\exp\left(-\sum_{i=0}^{\editc{N-1}}
      \rho_{i\tau}d\left(\phi_{i\tau}(x), \phi_{i\tau}(x_0) \right)^2
      \left(-\smash{\displaystyle\sup_{x\in
      B_a(x_0)^c}}\frac{\left| \int_0^t A_s(x,x_0)^T dW_s
      \right|}{\sum_{i=0}^{\editc{N-1}} \rho_{i\tau}
      d\left(\phi_{i\tau}(x),\phi_{i\tau}(x_0)\right)^2}+\frac{1}{2}
      \right) \right)\mu(dx)}{\int_{Q_N(r,x_0)}\exp\left(\int_0^t
      A_s(x,x_0)^TdW_s-\frac{R}{2} \sum_{i=0}^{\editc{N}}
      \rho_{i\tau}d\left(\phi_{i\tau}(x),\phi_{i\tau}(x_0)\right)^2 
      \right) \mu(dx)},
  \end{align}  
  where we used the fact that 
  \begin{align*}
    \frac{\left|\int_0^t A_s(x,x_0)^TdW_s\right|}{\sum_{i=0}^{\editc{N-1}}
    \rho_{i\tau}
    d\left(\phi_{i\tau}(x),\phi_{i\tau}(x_0)\right)^2}\leq \sup_{x\in
    B_a(x_0)^c}\frac{\left| \int_0^t A_s(x,x_0)^T dW_s
    \right|}{\sum_{i=0}^{\editc{N-1}} \rho_{i\tau}
    d\left(\phi_{i\tau}(x),\phi_{i\tau}(x_0)\right)^2}
  \end{align*}  
  From \eqref{upper}, it is clear that in order to establish our
  desired result, it is sufficient to find suitable estimates on
  \editc{\begin{align}\label{eq:sup1}
\Delta_a(x_0,t):=    \sup_{x\in B_a(x_0)^c} \frac{\left|\int_0^t
    A_s(x,x_0)^TdW_s\right|}{\sum_{i=0}^{\editc{N-1}}
    \rho_{i\tau}d\left(\phi_{i\tau}(x),\phi_{i\tau}(x_0)\right)^2}
    \qquad \textrm{and} \qquad    \sup_{x\in Q_N(r,x_0)}\left|\int_0^t
    A_s(x,x_0)^TdW_s\right| \,.
  \end{align}
It will be useful to define the following quantity:
\begin{align}\label{eq:sup2}
\Gamma_a(x_0,t):=\sup_{x\in B_a(x_0)}\left|\int_0^t
A_s(x,x_0)^TdW_s\right|
\end{align}}
Since $t$ uniquely determines $N$, we have omitted the dependency on $N$ in ~\eqref{eq:sup1}. 
The relevant bounds on \editc{$\Delta_a(x_0,t)$ and $\Gamma_a(x_0,t)$} are stated in Lemmas~\ref{auxlem1}--\ref{auxlem2}.
  
  \begin{lem}\label{auxlem1}
    $\forall a>0$ and $\forall t \geq \tau$ with
    $N = \lfloor \frac{t}{\tau} \rfloor$, we have
    \begin{align} \label{eq:boundlem1}
      \mathbb{E} \left[\editc{\Gamma_a(x_0,t)}\right]
      & \leq \editc{S(N,a)}  \,,
    \end{align}
    with $b_{\mathcal{K}(a)}, q_{\mathcal{K}(a)}$ being the constants from Lemma~\ref{covestimate}, while
    $C = C(\tau), K, R$ are from Assumptions~\ref{a1}--\ref{a3} \editc{and 
    \begin{align}\label{eq:SNA}
    S(N,a):=
    24 \sqrt{(N+1)R\rho_{N\tau}}\left(K\left(p\left(N+1\right)\log\left (C\right)+\log
    	\left(q_{\mathcal{K}(a)}b_{\mathcal{K}(a)}^p\right)\right)^{\frac{1}{2}}+ 2 \sqrt{Kp}\right)
    \end{align}}
  \end{lem}
  \begin{proof}
    Since $x_0$ and \editc{$W_{(\cdot)}$} are independent,
    \begin{align} \label{eq:ebound1}
      \mathbb{E}\left[\editc{\Gamma_a(x_0,t)}\right] \leq
      2 \editc{\mathbb{E}\left[ \mathbb{E} \left[ \sup_{x\in B_a(x_0)}\int_0^t
      A_s(x,x_0)^TdW_s\Big|\mathcal{F}^{x_0}\right]\right]}, 
    \end{align}
    where, $\mathcal{F}^{x_0}$ is the $\sigma$-algebra generated by $x_0$. Observing that $\int_0^t A_s(x,x_0)^TdW_s$
    is a centered \editc{Gaussian} process, we use the result
    \cite[Theorem 6.1]{ledoux1996isoperimetry}
    \begin{align} \label{eq:ebound2}
      \mathbb{E} \left[\sup_{B_a(x_0)} \int_0^t A_s(x,x_0)^TdW_s
      \Big|\mathcal{F}^{x_0}\right] \leq 24 \int_0^\infty \log^{\frac{1}{2}} \left(N
      \left(B_a(x_0), \bar{d}_t, \epsilon \right) \right) d\epsilon \,,
    \end{align}
    where, $N \left(B_a(x_0), \bar{d}_t, \epsilon \right)$ is the
    minimum number of balls of radius $\epsilon$ under the
    psuedo-metric $\bar{d}_t$ required to cover $B_a(x_0)$ (which is finite for all $\epsilon$ due to the
      compactness of $B_a(x_0)$), where,    
    \begin{align}\nonumber
      \bar{d}_t(x,y)
      &:= \sqrt{\mathbb{E}_W \left[ \left( \int_0^t A_s(x,x_0)^TdW_s -
        \int_0^t A_s(y,x_0)^T dW_s \right)^2 \right]}\\\nonumber
      &= \sqrt{ \int_0^t \left\| h \left(s, \phi_s(x) \right) - h
        \left(s, \phi_s(y) \right) \right\|^2 ds} \,.
    \end{align}    
    From \eqref{uplowbound}, \editc{it} is clear that,
    \begin{align}\nonumber
      \bar{d}_{t}(x,y) \leq \sqrt{R} D_{\editc{N}}(x,y) \leq \editc{\sqrt{(N+1) R
      \rho_{N\tau}} d_{N+1}(x,y)} \,,
    \end{align}
    which implies that 
    \begin{align}\nonumber
      N\left(B_a(x_0), \bar{d}_{t}, \epsilon \right) \leq N\left(
      B_a(x_0), \sqrt{R} D_{\editc{N}}, \epsilon \right) \leq N\left(
      B_a(x_0), \editc{\sqrt{(N+1) R \rho_{N\tau}} d_{N+1}}, \epsilon \right)
      \,
    \end{align}
 \editc{
	Denoting
	$\bar{\epsilon}(a,N):= \sqrt{(N+1)\rho_{N\tau}R} \sup_{x, y \in
		B_a \left( x_0 \right)} d_{N+1} \left(x, y \right)$, we get the
	following bound:
	\begin{align}\nonumber
	\int_0^\infty \log^{\frac{1}{2}} \left( N \left(B_a(x_0),
	\bar{d}_{t}, \epsilon \right) \right) d\epsilon 
	&\leq \int_0^{\bar{\epsilon}(a,N)} \log^{\frac{1}{2}} \left(N
	\left(B_a(x_0), \sqrt{(N+1) R \rho_{N\tau}} d_{N+1}, 
	\epsilon \right) \right) d\epsilon \\\nonumber
	&= \int_0^{\bar{\epsilon}(a,N)} \log^{\frac{1}{2}} \left(N
	\left(B_a(x_0), d_{N+1}, \epsilon \left( \sqrt{(N+1) R
		\rho_{N\tau}} \right)^{-1} \right) \right) d\epsilon
	\\\label{entr}
	&= \sqrt{(N+1) R \rho_{N\tau}}
	\int_0^{\frac{\bar{\epsilon}(a,N)}{\sqrt{(N+1)R\rho_{N\tau}}}}
	\log^{\frac{1}{2}} \left(N \left(B_a(x_0), d_{N+1}, \beta
	\right) \right) d\beta \,.
	\end{align}
In the following, we compute the upper bound of the integral in the last inequality. To that end, define
\begin{align*}
\delta(a,N):= \frac{\bar{\epsilon}(a,N)}{\sqrt{(N+1)\rho_{N\tau}R}}
\end{align*}
and from the definition of $\bar{\epsilon}(a,N)$, we have $\delta(a,N)\leq K$

\begin{align*}\nonumber
&
\int_0^
{\delta(a,N)}
\log^{\frac{1}{2}}\left(N\left(B_a(x_0),d_{N+1},\beta\right)\right)d\beta\\
&= \int_0^
{\delta(a,N)}
\log^{\frac{1}{2}}\left(r_{N+1}\left( \beta, B_a(x_0),
\phi_\tau \right) \right) d\beta, \text{ from the definitions of $N(\cdot,\cdot,\cdot)$ and $r_{(\cdot)}(\cdot, \cdot,\cdot)$}\\     
&\leq 
\int_0^{\delta(a,N)}
\log^{\frac{1}{2}} \left( q_{\mathcal{K}(a)}
\left(C^{N+1}b_{\mathcal{K}(a)}\beta^{-1}\right)^p\right)d\beta, \text { from Lemma~\ref{covestimate} with $\mathcal{K}=\mathcal{K}(a):= \overline{ \cup_{y\in  U}     \{x: d(x,y)\leq a\} }$ }\\\nonumber
&\leq 
\int_0^{\delta(a,N)}
\left( \log \left (q_{\mathcal{K}(a)} (C^{N+1}b_{\mathcal{K}(a)})^p \right) + p \log \left(
\beta^{-1} + 1  \right) \right)^\frac{1}{2} d\beta
\end{align*}
Using the following inequalities: for $x,y>0$
\begin{align*}
\left(\log \left(1+\frac{1}{x}\right)\right)^{\frac{1}{2}}\leq \frac{1}{\sqrt{x}} \text{ and } \sqrt{x+y}\leq \sqrt{x}+\sqrt{y}
\end{align*}
and computing the resulting integral, we have
\begin{align}\nonumber 
\int_0^
{\delta(a,N)}&
\log^{\frac{1}{2}}\left(N\left(B_a(x_0),d_{N+1},\beta\right)\right)d\beta\\\nonumber 
&\leq \delta(a,N) \left( p \left(N+1 \right) \log C + \log \left(q_{\mathcal{K}(a)}b_{\mathcal{K}(a)}^p \right)\right)^{\frac{1}{2}} + 2 \sqrt{\delta(a,N)
	p}\\\label{eq:lem32finalineq} 
&\leq K \left( p \left(N+1 \right) \log C + \log \left(q_{\mathcal{K}(a)}b_{\mathcal{K}(a)}^p \right)\right)^{\frac{1}{2}} + 2 \sqrt{K
	p}, \text{ from the bound on $\delta(a,N)$.}
\,.
\end{align}

Combining the inequalities~\eqref{eq:ebound1},
	~\eqref{eq:ebound2}, and~\eqref{entr} with~\eqref{eq:lem32finalineq}
	gives ~\eqref{eq:boundlem1}, completing the proof of the lemma.}
  \end{proof}
  As noted earlier, we also need to have estimate on
  \editc{$\Delta_a(x_0,t)$}
  which is given by the lemma below.
  
  \begin{lem}\label{auxlem2}
    $\forall a>0$, $\forall t \geq \tau$ and
    $N=\lfloor\frac{t}{\tau}\rfloor$, 
    \begin{align}\nonumber
      \mathbb{E} \left[ \editc{\Delta_a(x_0,t)} \right] \leq
      \frac{S(N,a)}{\editc{L^2(a)}\sum_{i=0}^{\editc{N-1}} \rho_{i\tau}} \,,
    \end{align}
    where, \editc{$S(N,a)$ is as defined in Equation~\eqref{eq:SNA} and $L(a)$ is a constant from Assumption~\ref{a22}}.
  \end{lem}
  \begin{proof}  
\editc{From Assumption~\ref{a4}, without loss in generality, fix $a>0$ such that $a\leq K$, where $K$ is the diameter of $U$ (in Assumption~\ref{a2}). It then follows from Assumption~\ref{a4} that  $\forall x \in supp(\mu)\cap B_a(x_0)^c$. In other words, we have
    \begin{align}\label{eq:subsets}
supp(\mu)\cap B_a(x_0)^c\subset supp(\mu)\cap \overline{B_a(x_0)^c}\subset \overline{B_a(x_0)^c}\cap B_{K}(x_0)
    \end{align}
Using this notation, we obtain the required bound as follows:
\begin{align}\nonumber
\mathbb{E} \left[ \editc{\Delta_a(x_0,t)} \right]&\leq\mathbb{E}\left[\sup_{x\in \overline{B_a(x_0)^c}\cap B_K(x_0)}
\frac{ \left| \int_0^t A_s(x,x_0)^T dW_s \right|}{\sum_{i=0}^N
	\rho_{i\tau} d \left(\phi_{i\tau}(x), \phi_{i\tau}(x_0)
	\right)^2}\right], \text{ from~\eqref{eq:subsets}}\\
&\leq 
 \frac{1}{L^2(\editc{a}) \sum_{i=0}^{\editc{N-1}} \rho_{i\tau}}
 \mathbb{E}
\left[ \editc{\Gamma_{\editc{K}}(x_0,t)}  \right],\editc{\text{ from Assumption~\ref{a22} and definition of $\Gamma_K(x_0,t)$}}  \\\nonumber
&\leq 
  \frac{1}{L^2(a)\sum_{i=0}^{\editc{N-1}} \rho_{i\tau}}
S(N,\editc{K}), \text{ from Lemma~\ref{auxlem1}}
\end{align}}
 This completes the proof of the lemma.
  \end{proof}
\editc{In the following, we will also need the limit of $\mathbb{E} \left[ \editc{\Delta_a(x_0,t)}
	\right]$.
	\begin{lem} $\forall a > 0$,
		\begin{align}\label{exp0}
		\lim_{t\to\infty} \mathbb{E} \left[ \editc{\Delta_a(x_0,t)}
		\right] = 0 \,.
		\end{align}
	\end{lem}
	  \begin{proof} 
We note that $t\to \infty \Leftrightarrow N\to\infty$ and
  $\frac{N\rho_{N\tau}}{\sum_{i=0}^N \rho_{i\tau}}\leq C'$ (from Assumption~\ref{a1}). And also, from the definition of $S(N,a)$ and from Lemma~\ref{auxlem2}, we know that, for large $N$,
  \begin{align*}
  \mathbb{E} \left[ {\Delta_a(x_0,t)}\right]\leq O\left(\frac{N\sqrt{\rho_{N\tau}}}{\sum_{i=0}^{\editc{N-1}} \rho_{i\tau}}\right)= O\left(\frac{N\rho_{N\tau}}{\left(\sum_{i=0}^{\editc{N-1}} \rho_{i\tau}\right)\sqrt{\rho_{N\tau}}}\right)
  \end{align*} 
  Now using the fact that $\rho_t\uparrow \infty$, it suffices to show that 
  \begin{align*}
\liminf_{N\to \infty}  \frac{\sum_{i=0}^{\editc{N-1}} \rho_{i\tau} }{N\rho_{N\tau}} >C'', \text{ for some $C''>0$}
  \end{align*}
  To show this, consider
  \begin{align*}
  \frac{\sum_{i=0}^N \rho_{i\tau}}{N\rho_{N\tau}}=\frac{\sum_{i=0}^{N-1}\rho_{i\tau}}{N\rho_{N\tau}} +\frac{1}{N}\geq  (C')^{-1}>0\\   
  \end{align*}
  Taking limit inferior as $N\to \infty$ in the above inequality will give us the desired bound and proves the result.
\end{proof}} 
    
  Finally, we need the lemma below to complete the proof of
  Theorem~\ref{th1}.
  \begin{lem}\label{auxlem3}
    $\forall a>0$, $\exists\alpha=\alpha(a) >0$ such that
    $\plim_{t\to\infty} e^{\alpha
      t}{\pi^0_{t}\left(B_a(x_0)^c\right)}=0.$
  \end{lem}
  \begin{proof}
    From \eqref{exp0}, we have 
    \begin{align*}
      \plim_{t\to\infty}\editc{\Delta_a(x_0,t)}=0.
    \end{align*}
    Recall that $t\to \infty \Leftrightarrow N\to \infty$. In
    particular, the above equation holds for any subsequence $\{t_j\}$
    . Therefore, there is sub-subsequence $\{t_{j_q}\}$ such that
    \begin{align*}
      \lim_{q\to \infty}\editc{\Delta_a(x_0,t_{j_q})}=0,\text{
      $\bb{P}$-a.s.}
    \end{align*}

    From the above, for large enough $q$, we have 
    \begin{align*}
   \editc{ \Delta_a(x_0,t_{j_q})}<  \frac{1}{4}
    \end{align*}
    and thereby,
    \begin{align}\nonumber
      \int_{B_a(x_0)^c}
      &\exp\left(-\sum_{i=0}^{\editc{N(j,q)-1}}\rho_{i\tau}
        d\left(\phi_{i\tau}(x),\phi_{i\tau}(x_0)\right)^2 \left(\editc{-\Delta_a(x_0,t_{j_q})}+\frac{1}{2}
        \right) \right)\mu(dx)\\\nonumber
      &\leq \int_{B_a(x_0)^c}\exp\left(-\sum_{i=0}^{\editc{N(j,q)-1}}
        \rho_{i\tau}d\left(\phi_{i\tau}(x),\phi_{i\tau}(x_0)\right)^2
        \frac{1}{4} \right)\mu(dx)\\\label{upper1}
      &\leq \exp\left(-\frac{1}{4}L^2(a)\sum_{i=0}^{\editc{N(j,q)-1}}\rho_{i\tau}\right),
    \end{align}
    where, $N(j,q):=\lfloor\frac{t_{j_q}}{\tau}\rfloor$ and we used \editc{Assumption~\ref{a22}} together with the fact that
    $\mu\left(B_a(x_0)^c\right)\leq 1$. We now consider
    \begin{align}\nonumber
      \int_{Q_N(r,x_0)}
      &\exp\left(\int_0^t
        A_s(x,x_0)^TdW_s-\frac{1}{2}\int_0^t \left|A_s(x,x_0)\right|^2 ds
        \right)\mu(dx)\\\nonumber
      &\geq \int_{Q_N(r,x_0)}\exp\left(\int_0^t
        A_s(x,x_0)^TdW_s-\frac{1}{2} R \sum_{i=0}^{\editc{N}}\rho_{i\tau}
        d\left(\phi_{i\tau}(x),\phi_{i\tau}(x_0)\right)^2\right)\mu(dx)
      \\\nonumber
      &\geq \int_{Q_N(r,x_0)}\exp\left(\int_0^t
        A_s(x,x_0)^TdW_s-\frac{1}{2}R d^2_{N+1}(x,x_0)\sum_{i=0}^{\editc{N}}
        \rho_{i\tau}\right)\mu(dx)\\\label{lower1}
      &\geq \int_{Q_N(r,x_0)}\exp\left(-\sum_{i=0}^{\editc{N}}
        \rho_{i\tau}\left(-\frac{\int_0^t A_s(x,x_0)^TdW_s}{\sum_{i=0}^{\editc{N}}
        \rho_{i\tau}}+\editc{\frac{R}{2} r^2}\right)\right)\mu(dx),
    \end{align}
    In the last inequality, we used the definition of
    $Q_N(r,x_0)$. And also, from the definition of $Q_N(r,\editc{x_0})$, it is
    clear that  $Q_N(r,\editc{x_0})\subset B_r(\editc{x_0})\subset B_a(x_0)$, \editc{for $r\leq  a$}. Therefore,
    \begin{align*}
      \mathbb{E}\left[\sup_{Q_N(r,x_0)}\left|\int_0^t
      A_s(x,x_0)^TdW_s\right|\right]\leq
      \mathbb{E}\left[\editc{\Gamma_r(x_0,t)}\right]\leq
      \mathbb{E}\left[\editc{\Gamma_a(x_0,t)}\right]
    \end{align*}
    From Lemma~\ref{auxlem1}, it follows that 
    \begin{align*}
      \frac{1}{\sum_{i=0}^{\editc{N}}
      \rho_{i\tau}}\mathbb{E}\left[\sup_{Q_N(r,x_0)}\left|\int_0^t
      A_s(x,x_0)^TdW_s\right|\right]\leq
      \frac{\editc{S(N,a)}}{\sum_{i=0}^{\editc{N}} \rho_{i\tau}}
    \end{align*} 
\editc{From computations similar to those used in showing Equation~\eqref{exp0}, the right hand side of above inequality}
    converges to zero as $t\to \infty$ which again implies that
    \begin{align*}
      \plim_{t\to\infty}\frac{\editc{\Gamma_r(x_0,t)}}{\sum_{i=0}^{\editc{N}} \rho_{i\tau}}=0.
    \end{align*}
    In particular, it converges to zero in probability on subsequence
    $t_j$. Therefore, we can choose a sub-subsequence, $\{t_{j_q}\}$
    (that works for the previous scenario) such that
    \begin{align*}
      \lim_{q\to\infty}\frac{\sup_{Q_{N(j,q)}(r,x_0)}\left|\int_0^{t_{j_q}}
      A_s(x,x_0)^TdW_s\right|}{\sum_{i=0}^{\editc{N(j,q)}} \rho_{i\tau}}=
      0,\;\; \text{$\bb{P}$-a.s}.
    \end{align*}
    For large enough $q$,
    \begin{align*}
      \frac{\sup_{Q_{N(j,q)}(r,x_0)}|\int_0^{t_{j_q}}
      A_s(x,x_0)^TdW_s|}{\sum_{i=0}^{\editc{N(j,q)}}
      \rho_{i\tau}}<\editc{\frac{Rr^2}{2}}
    \end{align*} 
    Therefore, \eqref{lower1} becomes
    \begin{align}\nonumber
      \int_{Q_{N(j,q)}(r,x_0)}\exp\left(-\sum_{i=0}^{\editc{N(j,q)}}
      \rho_{i\tau}\left(-\frac{\int_0^t
      A_s(x,x_0)^TdW_s}{\sum_{i=0}^{\editc{N(j,q)}}
      \rho_{i\tau}}+\editc{\frac{Rr^2}{2}} \right)\right)\mu(dx)
      &\geq \int_{Q_{N(j,q)}(r,x_0)}\exp\left(-\sum_{i=0}^{\editc{N(j,q)}}
        \rho_{i\tau}\editc{Rr^2} \right)    \mu(dx) \\\label{lower2}
      &\geq \exp\left(-\sum_{i=0}^{\editc{N(j,q)}}
        \rho_{i\tau}\editc{Rr^2}\right)\mu(Q_{N(j,q)}(r,x_0))
    \end{align}
    Combining inequalities \eqref{lower2} and \eqref{upper1}, we have
    \begin{align}\label{finup}
      \pi^0_{t_{j_q}}(B_a(x_0)^c)\leq
      \frac{\exp\left(-\sum_{i=0}^{\editc{N(j,q)-1}}
      \rho_{i\tau}\left(\frac{L^2(a)}{4} - \editc{Rr^2}\right) +\rho_{\editc{N(j,q)\tau}}\editc{Rr^2}\right)}{\mu\left(Q_{N(j,q)}(r,x_0)\right)}
    \end{align}
    As mentioned in Section~\ref{sig}, in general, the set
    $Q_n(r,x_0)$ will shrink to a set containing $x_0$ (which is not
    open) as $n\to \infty$. \editc{This is because for chaotic systems, $\phi_t(x)$ depends very sensitively on $x$ after large times.}. We will see that
    $\mu\left(Q_{N(j,q)}(r,x_0)\right)$ goes to zero at most at an
    exponential rate.

    From the assumption of absolute continuity of $\mu$ with respect
    to $\sigma$ , we have $\frac{d\mu}{d\sigma}(x_0)>0$ $\bb{P}-\text{a.s.}$
    From the continuity of $\frac{d\mu}{d\sigma}$, there exist $r_1 >0$
    and $C_1>0$ such that $\frac{d\mu}{d\sigma} (x)>C_1$, for any
    $x\in B_{r_1}(x_0)$. Therefore, with the help of Radon-Nikodym
    Theorem and choosing $r< r_1$, we have
    \begin{align}\label{vollow}
      \mu\left(Q_{N(j,q)}(r,x_0)\right)> C_1
      \sigma\left(Q_{N(j,q)}(r,x_0)\right).
    \end{align}
    From the Assumption~\ref{a3}, we have the following:
    \begin{align*}
      d_N(x,y)&\leq C^Nd(x,y)\\
      \implies B_{\frac{r}{C^N}}(x_0)&\subset Q_N(r,x_0)
    \end{align*}
    \eqref{vollow} becomes 
    \begin{align} \nonumber
      \mu\left(Q_{N(j,q)}(r,x_0)\right)> C_1
      \sigma\left(Q_{N(j,q)}(r,x_0)\right)
      &> C_1 \sigma\left(B_{\frac{r}{C^{N(j,q)}}}(x_0)\right)\\\label{voldec}
      &> C_1C_2 \left(\frac{r}{C^{N(j,q)}}\right)^p,
    \end{align}
    for some $C_2=C_2(p,\mathcal{K})$ (with $\mathcal{K}= \overline{ \cup_{y\in  U}     \{x: d(x,y)\leq a\}}$) and \eqref{finup} becomes
    \begin{align*}
      \pi^0_{t_{j_q}}(B_a(x_0)^c)\leq
      \frac{\exp\left(-\sum_{i=0}^{N(j,q)}
      \rho_{i\tau}\left(\frac{L^2(a)}{4} - \editc{Rr^2}\right) +\rho_{\tau
      \left(N(j,q)+1\right)}\editc{Rr^2}\right)}{C_1C_2
      \left(\frac{r}{C^{N(j,q)}}\right)^p}
    \end{align*}
    \begin{align}\label{ineqexp}
      \pi^0_{t_{j_q}}(B_a(x_0)^c)\leq \frac{1}{C_1C_2
      r^p}\exp\left(-\sum_{i=0}^{\editc{N(j,q)-1}}\rho_{i\tau}
      \left(\left(\frac{L^2(a)}{4} - \editc{Rr^2}\right) -\frac{\rho_{
      \editc{N(j,q)\tau}}\editc{Rr^2}}{\sum_{i=0}^{\editc{N(j,q)-1}}\rho_{i\tau}}-
      \frac{ N(j,q)\log_e C^p
      }{\sum_{i=0}^{\editc{N(j,q)-1}}\rho_{i\tau}}\right)\right)
    \end{align}
    Choosing $r$ small enough such that $\frac{L^2(a)}{4} - \editc{Rr^2} >0$ and
    from Assumption~\ref{a1} ($\lim_{t\to\infty} \rho_t = \infty$
    and
    $\lim_{t\to\infty} \frac{\int_0^{t} \rho_{s} ds}{\rho_t} =
    \infty$), for large enough $q$, the sum in the exponent can be
    made positive which results in
    $\pi^0_{t_{j_q}}\left(B_a(x_0)^c\right)$ converging exponentially
    to zero almost surely as $q\to\infty$. Since, the subsequence
    $t_j$ is arbitrary, it implies that
    $\pi^0_{t}\left(B_a(x_0)^c\right)$ converges exponentially to zero
    \editc{in probability} as $t\to\infty$.
  \end{proof}
  From Lemma~\ref{auxlem3}, it is clear that the assertion of the
  Theorem~\eqref{th1} follows.
\end{proof}

In the previous theorem, we established that conditional distribution
of $x_0$ given observations is asymptotically supported only on balls around $x_0$ of arbitrary small radius. In the following, we extend
the previous statement to any measurable set, $A\in \bb{B}(X)$.

\begin{prop}\label{pr1}
  Under the hypotheses of Theorem~\ref{th1},
  $\editc{\plim_{t\to\infty}\pi^0_t (A)=} 0,\;\; \forall A
  \in\bb{B}(X),\;\;x_0\notin A $
\end{prop}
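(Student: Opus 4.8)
The plan is to deduce Proposition~\ref{pr1} from Theorem~\ref{th1} by a routine approximation argument that trades the ball $B_a(x_0)$ for an arbitrary measurable set $A$ not containing $x_0$. First I would fix $A \in \bb{B}(X)$ and a realization with $x_0 \notin A$, and use the fact that $\{x_0\}$ is closed (so its complement is open) to choose $a = a(x_0) > 0$ small enough that $B_a(x_0) \cap A = \emptyset$; equivalently $A \subset B_a(x_0)^c$. Since $\pi_t^0$ is a probability measure for each $t$, monotonicity of measures gives the pointwise bound $\pi_t^0(A) \le \pi_t^0\!\left(B_a(x_0)^c\right)$ for every $t$.

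Next I would invoke Lemma~\ref{auxlem3} (equivalently, Theorem~\ref{th1} itself), which provides $\alpha = \alpha(a) > 0$ with $\plim_{t\to\infty} e^{\alpha t}\,\pi_t^0(B_a(x_0)^c) = 0$; in particular $\pi_t^0(B_a(x_0)^c) \xrightarrow{P} 0$. Combining this with the domination $0 \le \pi_t^0(A) \le \pi_t^0(B_a(x_0)^c)$ and the fact that convergence in probability is preserved under domination by a sequence converging to $0$ in probability, I conclude $\plim_{t\to\infty}\pi_t^0(A) = 0$, which is the claim.

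The one subtlety to handle carefully is that the radius $a$ depends on $x_0$, which is itself random: the statement $x_0 \notin A$ is understood to hold on a fixed event (or for a fixed $\omega$), and there $\mathrm{dist}(x_0, A) =: 2a > 0$ is a positive (random) number. The cleanest way to phrase this is to work $\omega$ by $\omega$ on the event $\{x_0 \notin A\}$: for each such $\omega$ the value $a(\omega)$ is a genuine positive constant, Lemma~\ref{auxlem3} applies with that constant, and the domination $\pi_t^0(A) \le \pi_t^0(B_{a(\omega)}(x_0)^c)$ holds. One should also note that, as in Theorem~\ref{th1}, the relevant convergence is in probability rather than almost sure, but this causes no difficulty since the monotone bound is deterministic for each $t$, so $e^{\alpha t}\pi_t^0(A) \le e^{\alpha t}\pi_t^0(B_a(x_0)^c) \xrightarrow{P} 0$ directly. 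I do not anticipate a genuine obstacle here — the entire content of the proposition is already carried by Theorem~\ref{th1}, and the only work is the set-theoretic reduction $A \subset B_a(x_0)^c$ together with monotonicity of $\pi_t^0$; if one wants, the exponential rate $e^{\alpha(a(x_0)) t}$ even survives the passage to $A$.
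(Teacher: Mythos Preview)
There is a genuine gap in your argument. The step ``choose $a = a(x_0) > 0$ small enough that $B_a(x_0) \cap A = \emptyset$'' and the claim ``$\operatorname{dist}(x_0,A) =: 2a > 0$'' require $x_0 \notin \overline{A}$, not merely $x_0 \notin A$. For an arbitrary Borel set $A$ this fails: take for instance $A$ to be a dense open subset of $U$ whose complement in $U$ has positive $\sigma$-measure. Then on the event $\{x_0 \in U \setminus A\}$, which has positive probability under $\mu$, every ball $B_a(x_0)$ meets $A$, so the inclusion $A \subset B_a(x_0)^c$ is unavailable and your domination $\pi^0_t(A) \le \pi^0_t(B_a(x_0)^c)$ cannot be set up. The observation that $\{x_0\}$ is closed only gives $A \subset X \setminus \{x_0\}$, which says nothing about the distance from $x_0$ to $A$.

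Your argument is correct exactly when $A$ is closed (or, more generally, on the event $\{x_0 \notin \overline{A}\}$), and that is the easy case. The paper's proof is structured precisely to bridge the gap you skipped: it first handles open and closed sets --- where a ball separating $x_0$ from the set is available --- and then invokes the inner/outer regularity of Borel probability measures on metric spaces (Billingsley, Theorem~1.1) to sandwich a general $A$ between a closed $\mathcal{C} \subset A$ and an open $\mathcal{O} \supset A$ with $\pi^0_{t_j}(\mathcal{O}\setminus\mathcal{C})$ small. The passage from balls to arbitrary Borel sets is thus the actual content of the proposition beyond Theorem~\ref{th1}, and it is exactly this step that your proposal omits.
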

\begin{proof}
\editc{If we consider an arbitrary sequence $t_j \to \infty$, there exists a subsequence that is still denoted by  $t_{j}$ such that 
	\begin{align*}
	\lim_{j\to\infty} e^{\alpha(a)t_j} \left( \pi^0_{t_j}(B_a(x_0))-1\right)=0,\text{ $\bb{P}$-a.s.} 
	\end{align*}
  It can be seen easily that the conclusion of the theorem holds even
  if $d(x,x_0)\leq a$ is replaced with $d(x,x_0)< a$. Indeed, fixing $\rho <a$ and $\gamma>0$, we have 
  \begin{align}\nonumber
    e^{\gamma t_j}\left(\pi^0_{t_j} \left(B_{a-\rho} (x_0)\right)-1\right)\leq e^{\gamma t_j}\left(\pi^0_{t_j} \left(\left\{x \in X : d(x,x_0)<
    a\right\}\right)-1\right)
     \leq e^{\gamma t_j}\left(\pi^0_{t_j} \left(B_a (x_0)\right)-1\right),
  \end{align}
Now choosing $\gamma= \min\{\alpha(a),\alpha(a-\rho)\}$, we can conclude that 
\begin{align*}
e^{\gamma t_j}(\pi^0_{t_j} (B_{a-\rho} (x_0))-1)\xrightarrow{ }\;0 \text{ and } e^{\gamma t_j}(\pi^0_{t_j} (B_{a} (x_0))-1)\xrightarrow{}\;0, \text{ as } j\to\infty, \text{ $\bb{P}$-a.s.}
\end{align*}
  and thereby,  
  \begin{align}\label{eq:openball}
\lim_{j\to\infty}e^{\gamma t_j}\left(\pi^0_{t_j} \left(\left\{x \in X : d(x,x_0)< a\right\}\right)-1\right)=0 , \;\; \forall a>0,\text{ $\bb{P}$-a.s.}
\end{align}
Since for any open set $\mathcal{O}$ containing $x_0$, there exists $r>0$ such that $\left\{x \in X : d(x,x_0)<r\right\}\subset \mathcal{O}$, we have
  \begin{align}\label{open}
\lim_{j\to\infty}  \pi^0_{t_j} \left(\mathcal{O}^c\right)&\leq \lim_{j\to\infty} \pi^0_{t_j} \left(\left\{x \in X : d(x,x_0)\geq r\right\}\right)
= 0, \text{ $\bb{P}$-a.s.},\text{ from~\eqref{eq:openball}}.   
\end{align}
For an open set $\mathcal{O}_1$ not containing $x_0$, there exists $r>0$ such that $\left\{x \in X : d(x,x_0)<r\right\}\cap U_1=\emptyset$. From this and~\eqref{eq:openball}, it is trivial to see that 
\begin{align*}
\lim_{j\to\infty} \pi^0_{t_j}(\mathcal{O}_1)=0, \text{ $\bb{P}$-a.s.}
\end{align*}
And also, for any closed set $\mathcal{C}$, applying the above argument for $\mathcal{C}^c$, we obtain
  \begin{align}\label{close}
  \lim_{j\to\infty}\pi^0_{t_j} (\mathcal{C}) =\begin{cases} 
    &    1, \text{ $\bb{P}$-a.s.}, \text{ if $x_0 \in \mathcal{C}$}\;  \;\;  \;\\
                                & 0, \text{ $\bb{P}$-a.s.},  \text{ if $x_0 \notin \mathcal{C}$.}   
  \end{cases} 
  \end{align}
  Finally, to extend it to all measurable sets, we use the property of
  regular probability measure with Borel $\sigma$-algebra of a metric
  space \cite[Theorem 1.1]{billingsley1999convergence}.
   
  By \cite[Theorem 1.1]{billingsley1999convergence}, for every
  measurable set $A\in \bb{B}(X)$, there exist closed set $C_0$, open
  set $U_0$ such that $\mathcal{C}\subset A \subset \mathcal{O}$ and
  $\pi^0_{t_j}(U_0/C_0)<\frac{1}{2}$.
   
  Let $A$ be such that $x_0 \in A$ which implies that $x_0 \in
  \mathcal{O}$. Choose $0<\eta<\frac{1}{4}$ and $j$ large enough such that
  $\pi^0_{t_j}(\mathcal{O})>1-\eta$, $\bb{P}$-a.s. Considering $\mathcal{C}$, if $x_0 \notin \mathcal{C}$ then
  again by choosing $j$ large enough, we have $\pi^0_{t_j}(\mathcal{C})<\eta$, $\bb{P}$-a.s. But
  this is a contradiction. Indeed, as
  $\pi^0_{t_j}(\mathcal{O})=\pi^0_{t_j} (\mathcal{C})+ \pi^0_{t_j}(\mathcal{O}/\mathcal{C})$ and
  $\pi^0_{t_j}(\mathcal{O})< \eta +\frac{1}{2} <1-\eta$, $\bb{P}$-a.s. Therefore, $x_0 \in \mathcal{C}$.
   
  We note that we have invoked the almost sure convergence only a finitely many number of times. This allows us to conclude that $\lim_{j\to\infty}\pi^0_{t_j}(A)=0$, $\bb{P}$-a.s. Since the sequence $t_j$ is arbitrary, we have $\plim_{t\to\infty}\pi^0_t(A)=0$.}
\end{proof}

\subsection{Stability of the filter}
We need the following lemma in proving the filter stability.
\begin{lem}{\cite[Pg. 55]{davidwilliams1991probability}}[\text{Scheffe's
		Lemma}]\label{sche}
	Suppose $f_n$ and $f$ are non-negative integrable functions in
	$\mathcal{L}^1(\Omega,\mathcal{B}, m)$ and
	$f_n\xrightarrow{n\to \infty} f \;\;\text{a.s}$. And also, suppose
	that $m(f_n)\xrightarrow{n\to \infty} m(f)$. Then
	$m\left(\left|f_n-f\right|\right)\xrightarrow{n\to \infty}0$
\end{lem}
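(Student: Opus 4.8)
The plan is to reduce the claim to the dominated convergence theorem applied to the positive part of $f-f_n$. First I would set $g_n := (f-f_n)^+ = \max(f-f_n,0)$. Since $f_n \geq 0$ we have $0 \leq g_n \leq f$ pointwise, and since $f_n \to f$ a.s. we have $g_n \to 0$ a.s. Because $f \in \mathcal{L}^1(\Omega,\mathcal{B},m)$ dominates the whole sequence $\{g_n\}$, the dominated convergence theorem yields $m(g_n) \to 0$ as $n \to \infty$.

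Next I would invoke the elementary pointwise identity $|a-b| = (a-b) + 2(b-a)^+$, valid for all real $a,b$ (check the two cases $a \geq b$ and $a < b$), applied with $a = f_n$, $b = f$. This gives
\begin{align*}
  |f_n - f| = (f_n - f) + 2(f - f_n)^+ = (f_n - f) + 2 g_n
\end{align*}
pointwise. Each term on the right is integrable: $f_n - f$ because $f_n, f \in \mathcal{L}^1$, and $g_n$ because $0 \leq g_n \leq f$. Integrating,
\begin{align*}
  m\bigl(|f_n - f|\bigr) = m(f_n) - m(f) + 2\, m(g_n).
\end{align*}

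Finally, letting $n \to \infty$: the hypothesis $m(f_n) \to m(f)$ makes the first two terms cancel in the limit, and the dominated convergence step above gives $m(g_n) \to 0$; hence $m(|f_n - f|) \to 0$, which is the assertion. There is essentially no genuine obstacle in this argument; the only point needing a word of justification is the applicability of dominated convergence, i.e.\ that $f$ is an integrable dominating function, which is immediate from $f \in \mathcal{L}^1$. As an alternative one could apply Fatou's lemma to the nonnegative sequence $f + f_n - |f_n - f| \geq 0$ (nonnegative by the triangle inequality), which converges a.s.\ to $2f$; combined with $m(f_n) \to m(f)$ this forces $\limsup_n m(|f_n - f|) \leq 0$. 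I would present the dominated-convergence route as the main proof since it is the most transparent.
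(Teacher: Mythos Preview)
Your proof is correct and is the standard argument for Scheff\'e's lemma. The paper itself does not prove this lemma at all; it simply quotes the statement with a citation to Williams' textbook and then invokes it in the proof of Theorem~\ref{stab}, so there is no in-paper argument to compare against.
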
 
\begin{thm}\label{stab}
  Under the hypotheses of Theorem~\ref{th1}, If $\mu$ and $\nu$ are equivalent, then for
  any bounded continuous $g:X\rightarrow \bb{R}$,
  \begin{align}\nonumber
    \lim_{t\to\infty} \bb{E}\left[\left|\pi_t(g)-\bar{\pi}_t(g)\right|\right]=0\
  \end{align}.   
\end{thm}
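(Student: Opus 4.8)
The plan is to leverage Theorem~\ref{th1} (concentration of the smoother at $x_0$) together with the explicit formulas for $\pi_t$ and $\bar\pi_t$ to reduce the problem to an application of Scheff\'e's lemma (Lemma~\ref{sche}). First I would write both filters as pushforwards of the smoother: for a bounded continuous $g$ and the ``correct'' smoother $\pi^0_t$ based on $\mu$, we have $\pi_t(g)=\pi^0_t(g\circ\phi_t)$, and similarly $\bar\pi_t(g)=\bar\pi^0_t(g\circ\phi_t)$ where $\bar\pi^0_t$ is the smoother based on $\nu$. So it suffices to control $\mathbb{E}\bigl[\,\bigl|\pi^0_t(g\circ\phi_t)-\bar\pi^0_t(g\circ\phi_t)\bigr|\,\bigr]$. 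Since $\mu$ and $\nu$ are equivalent, $\bar\pi^0_t$ is absolutely continuous with respect to $\pi^0_t$ with a density involving $d\nu/d\mu$; the key point is that Theorem~\ref{th1} (and Proposition~\ref{pr1}) applies to \emph{both} $\mu$ and $\nu$ since equivalence preserves the support condition and the absolute-continuity-plus-continuity hypothesis can be arranged, so both $\pi^0_t$ and $\bar\pi^0_t$ concentrate at $x_0$.

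Next I would exploit the concentration. Because $g\circ\phi_t$ need not be uniformly continuous, I cannot directly say $g(\phi_t(x))\approx g(\phi_t(x_0))$ for $x$ near $x_0$; this is the main obstacle and must be handled carefully. The route around it is to note that on the trapping region $U$, the map $x\mapsto\phi_t(x)$ is, for \emph{fixed} $t$, continuous, and $g$ is bounded continuous on the compact set $\overline{\phi_t(U)}$; so for each fixed $t$ the function $x\mapsto g(\phi_t(x))$ is uniformly continuous on $\overline U$. Given $\varepsilon>0$, pick $a=a(\varepsilon,t)$ so that $|g(\phi_t(x))-g(\phi_t(x_0))|<\varepsilon$ whenever $d(x,x_0)<a$; then split
\[
\bigl|\pi^0_t(g\circ\phi_t)-g(\phi_t(x_0))\bigr|\le\varepsilon+2\|g\|_\infty\,\pi^0_t\bigl(B_a(x_0)^c\bigr),
\]
and by Proposition~\ref{pr1} the second term $\to0$ in probability. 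The subtlety is that $a$ depends on $t$; I would resolve this by running the argument along subsequences (as the paper does elsewhere): for an arbitrary sequence $t_j\to\infty$, extract a sub-subsequence along which both $\pi^0_{t_j}(B_a(x_0)^c)\to0$ and $\bar\pi^0_{t_j}(B_a(x_0)^c)\to0$ almost surely for a countable dense set of radii $a$, use a diagonal argument over $\varepsilon=1/k$, and conclude $\pi^0_{t_j}(g\circ\phi_t)-g(\phi_{t_j}(x_0))\to0$ and likewise for $\bar\pi^0_{t_j}$, $\mathbb{P}$-a.s.; hence $\pi_{t_j}(g)-\bar\pi_{t_j}(g)\to0$ a.s.\ along the sub-subsequence, so $\plim_{t\to\infty}\bigl(\pi_t(g)-\bar\pi_t(g)\bigr)=0$.

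Finally I would upgrade convergence in probability to convergence in expectation. Here Scheff\'e's lemma (or more simply dominated convergence) applies: $|\pi_t(g)-\bar\pi_t(g)|\le 2\|g\|_\infty$ is a bounded, hence uniformly integrable, family, so $\plim_{t\to\infty}|\pi_t(g)-\bar\pi_t(g)|=0$ together with this uniform bound gives $\lim_{t\to\infty}\mathbb{E}\bigl[|\pi_t(g)-\bar\pi_t(g)|\bigr]=0$, which is the assertion. (If one prefers the stated Scheff\'e route, apply it to $f_t:=|\pi_t(g)-\bar\pi_t(g)|$ and $f\equiv0$ along the a.s.-convergent sub-subsequences, noting $\mathbb{E}[f_t]$ need only be shown to converge, which follows from boundedness plus a.s.\ convergence along every subsequence having a further a.s.-convergent sub-subsequence.) The main obstacle, as indicated, is handling the $t$-dependence of the modulus of continuity of $g\circ\phi_t$; the subsequence-plus-diagonalization device, which is already the workhorse of the proofs of Theorem~\ref{th1} and Proposition~\ref{pr1}, is what makes it go through.
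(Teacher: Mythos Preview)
Your route has a genuine gap precisely at the step you flag as ``the main obstacle,'' and the subsequence-plus-diagonalization device does not close it. For a chaotic flow the modulus of continuity of $x\mapsto g(\phi_t(x))$ on $\overline U$ deteriorates as $t$ grows --- by Assumption~\ref{a3} one has $d(\phi_t(x),\phi_t(x_0))\le C^{N}d(x,x_0)$ with $N=\lfloor t/\tau\rfloor$ --- so the radius $a(\varepsilon,t)$ you need shrinks, typically exponentially, as $t\to\infty$. Theorem~\ref{th1} and Proposition~\ref{pr1} give $\pi^0_t(B_a(x_0)^c)\to 0$ only for each \emph{fixed} $a>0$; a diagonal argument over a countable dense set of fixed radii still cannot produce $\pi^0_{t_{j_q}}(B_{a_q}(x_0)^c)\to 0$ when $a_q\downarrow 0$. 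Indeed the rate $\alpha(a)$ in Theorem~\ref{th1} depends on $L(a)$ from Assumption~\ref{a22}, with no uniform lower bound as $a\downarrow 0$, so there is no available trade-off between shrinking $a$ and growing $t$. What you are implicitly claiming is \emph{accuracy} of the filter, $\pi_t(g)-g(\phi_t(x_0))\to 0$, which is a separate and strictly harder statement than stability and is not a consequence of Theorem~\ref{th1}. A secondary issue: applying Theorem~\ref{th1} to the incorrect prior $\nu$ requires $d\nu/d\sigma$ to be continuous on its support; equivalence $\mu\sim\nu$ alone does not give this, so ``can be arranged'' is an extra hypothesis not present in the statement.

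The paper avoids the difficulty entirely by never testing either smoother against the $t$-dependent function $g\circ\phi_t$. With $J:=d\mu/d\nu$ it obtains
\[
\mathbb{E}\bigl[|\pi_t(g)-\bar\pi_t(g)|\bigr]\ \le\ \|g\|_\infty\,\mathbb{E}\!\left[\frac{\bigl|\,\mathbb{E}[J(x_0)\mid\mathcal{F}^y_t]-J(x_0)\,\bigr|}{\mathbb{E}[J(x_0)\mid\mathcal{F}^y_t]}\right],
\]
so one only needs $\mathbb{E}[J(x_0)\mid\mathcal{F}^y_t]\to J(x_0)$. Here $J$ is a single function independent of $t$: Proposition~\ref{pr1} identifies the limit of $\pi^0_t$ as the Dirac mass at $x_0$, and together with the martingale convergence theorem this yields $\mathbb{E}[J(x_0)\mid\mathcal{F}^y_t]\to\mathbb{E}[J(x_0)\mid\mathcal{F}^y_\infty]=J(x_0)$ a.s.\ and in $L^1$. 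Scheff\'e's lemma is then applied with $f_n:=J(x_0)/\mathbb{E}[J(x_0)\mid\mathcal{F}^y_{t_n}]$ and $f\equiv 1$ (note $\mathbb{E}[f_n]=1$ for all $n$), which handles the possibly unbounded ratio and delivers the conclusion without any uniform-in-$t$ control on $\phi_t$. That Radon--Nikodym identity, not a modulus-of-continuity argument, is the missing idea.
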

\begin{proof} \editc{Firstly, note that for $J:=\frac{d\mu}{d\nu}$, the martingale convergence theorem implies that  
	\begin{align}\label{eq:martconvth}
\lim_{t\to\infty}\bb{E}\left[J(x_0)|\mathcal{F}^y_t\right]= \bb{E}\left[J(x_0)|\mathcal{F}^y_\infty\right],\text{ $\bb{P}$-a.s. and in $L^1$.}
	\end{align}
  From the Proposition~\eqref{pr1}, for any measurable $A\in \bb{B}(X)$
  \begin{align}\label{meas}
    \pi^0_\infty(A):=\plim_{t\to\infty}\pi^0_t (A)=\begin{cases}1,&    x_0 \in A\\
                 0, &x_0\notin A   
            \end{cases}
  \end{align}
  This is by definition the Dirac measure at $x_0$ and therefore, we have
  \begin{align*} 
  \plim_{t\to\infty}\bb{E}\left[J(x_0)|\mathcal{F}^y_t\right]=J(x_0).
   \end{align*}
   Now, we can choose a sequence $t_j$ such that 
   \begin{align*}
   \lim_{j\to\infty}\bb{E}\left[J(x_0)|\mathcal{F}^y_{t_j}\right]=J(x_0), \text{ $\bb{P}$-a.s.}
   \end{align*} 
   This implies that 
   \begin{align}\label{mctconseq}
   \bb{E}\left[J(x_0)|\mathcal{F}^y_\infty\right]=J(x_0),\text{ $\bb{P}$-a.s.}
   \end{align}
   Indeed, consider the limit in~\eqref{eq:martconvth} over the sequence $t_j$. To summarize, we have shown that 
   \begin{align*}
   \lim_{t\to\infty}\bb{E}\left[J(x_0)|\mathcal{F}^y_t\right]= \bb{E}\left[J(x_0)|\mathcal{F}^y_\infty\right]=J(x_0),\text{ $\bb{P}$-a.s. and in $L^1$}.
   \end{align*}
   With $M:=\underset{x\in X}{\editc{\sup}}\left|g(x)\right|<\infty$, we have
  \begin{align}\nonumber
    \bb{E}\left[\left|\pi_t(g)-\bar{\pi}_t(g)\right|\right]
    &=\bb{E}\left[\frac{\left|\mathbb{E}\left[g\left(\phi_t(x)\right)\left(\mathbb{E}\left[J(x_0)|\mathcal{F}^y_t\right]-J(x_0)\right)|\mathcal{F}^y_t\right]\right|}{\mathbb{E}\left[J(x_0)|\mathcal{F}^y_t\right]}\right]\\\nonumber
    &\leq \bb{E}\left[\frac{\mathbb{E}\left[\left|g\left(\phi_t(x)\right)\left(\mathbb{E}\left[J(x_0)|\mathcal{F}^y_t\right]-J(x_0)\right)\right||\mathcal{F}^y_t\right]}{\mathbb{E}\left[J(x_0)||\mathcal{F}^y_t\right]}\right]\\\nonumber
    &\leq M\bb{E}\left[ \frac{\mathbb{E}\left[\left|\mathbb{E}\left[J(x_0)|\mathcal{F}^y_t\right]-J(x_0)\right||\mathcal{F}^y_t\right]}{\mathbb{E}\left[J(x_0)|\mathcal{F}^y_t\right]}\right]\\\nonumber
    &\leq M\bb{E}\left[ \mathbb{E}\left[\frac{\left|\mathbb{E}\left[J(x_0)|\mathcal{F}^y_t\right]-J(x_0)\right|}{\mathbb{E}\left[J(x_0)|\mathcal{F}^y_t\right]}\Big|\mathcal{F}^y_t\right]\right], \text{ since $\mathbb{E}\left[J(x_0)|\mathcal{F}^y_t\right]^{-1}$ is $\mathcal{F}^y_t$-measurable}\\\label{final}
    &\leq M \bb{E}\left[ \frac{\left|\mathbb{E}\left[J(x_0)|\mathcal{F}^y_t\right]-J(x_0)\right|}{\mathbb{E}\left[J(x_0)|\mathcal{F}^y_t\right]}\right]
  \end{align}
  
  Finally, choose a subsequence $t_n\uparrow \infty$. Apply the
  Lemma~\ref{sche} for
  $f_n:=\frac{J(x_0)}{\mathbb{E}\left[J(x_0)|\mathcal{F}^y_{t_n}\right]}
  $ (Note that $J(x_0)>0\;\; \text{a.s}$) and $f:= 1$, to get the
  desired result.}
\end{proof}

\begin{remark}\label{r2}
\editc{  We note that 
  Assumptions~\ref{a1},
  \eqref{a2} and \eqref{a3} together form a sufficient condition for the
  notion of observability defined in
  \cite[Definition~2]{van2009observability}. This can be seen as follows:}

  Using~\eqref{mctconseq}, we can conclude that 
  $x_0$ is measurable
  with respect to $\mathcal{F}^y_\infty$. It implies that there exists
  a function $F$, that is measurable with respect to
  $\mathcal{F}^y_\infty$ such that
  $F: C\left(\left[0,\infty\right),\bb{R}^n\right)\rightarrow X$ and
  $x_0=F\left(Y_{[0,\infty)}\right)$. Therefore, we arrive at the
  conclusion that law of observation process determines the law of
  $x_0$ uniquely which is exactly the definition of observability in
  \cite{van2009observability}.
\end{remark}

\subsection{Comparison with the results in
  \cite{cerou2000long}}\label{compres}
\editc{Since we have used the techniques of \cite{cerou2000long} in proving Theorem~\ref{th1}, the natural question to ask is whether Theorem~\ref{th1} directly follows from \cite{cerou2000long}. In the following, we show that the assumptions in \cite{cerou2000long} are too restrictive to obtain the desired results  even for very simple systems.}
In~\cite{cerou2000long}, the
signal space is $X = \mathbb{R}^p$ while $h$ and $\phi_t$ are assumed
to satisfy the following assumption: for $x,y\in \mathbb{R}^p$,
\begin{align}\label{cerouassump}
  V_t\|x-y\|^2 \leq \int_0^t \|h(s,\phi_s(x))-h(s,\phi_s(y))\|^2 ds
  \leq RV_t\|x-y\|^2,
\end{align}
where, $V_t$ is an positive function such that $V_t \to \infty$ as
$t \to \infty$ and $R > 1$. Under this assumption, Proposition~$2.1$
of~\cite{cerou2000long} proves exactly the same conclusion as our
Theorem~\ref{th1}, namely, the limit in~\eqref{th1eq} showing the
concentration of the smoother. We show below, with an example, that
the Assumption~\ref{cerouassump} from~\cite{cerou2000long} is a
stronger assumption compared to the assumptions we use, in particular,
focusing on Assumptions~\ref{a1} and its consequence
in~\ref{uplowbound}.

To see this, consider $h(t,x)= G(t) x$, where $G(t)$ is a real valued
function. Equation~\eqref{cerouassump} becomes
\begin{align}\label{ceroulorenz}
  V_t\|x-y\|^2 \leq \int_0^t |G(s)|^2\|\phi_s(x)-\phi_s(y)\|^2 ds \leq
  RV_t\|x-y\|^2,
\end{align}
Now let $\phi_t$ be the solution of Equation~\eqref{lor1} (for
example, the Lorenz 63 or Lorenz 96 models used in
Section~\ref{examplenoncompact}). If $x \in U$, then
\begin{align}\label{uplowlipshitz}
  \exp(4HR_Ut-\|A\|t)\|x-y\|\leq \|\phi_t(x)-\phi_t(y)\|\leq
  \exp(\frac{4(H\bar{R})^2t}{\lambda})\|x-y\|.
\end{align}
Here, $4HR_U-\|A\|\leq 0$. Indeed, choosing $y\in U$ results in the
following:
\begin{align*}
  \exp(4HR_Ut-\|A\|t)\|x-y\| \leq \|\phi_t(x)-\phi_t(y)\|\leq diam(U) \,.
\end{align*} 
Due to the difference in exponents in upper and lower bounds of
Equation~\eqref{uplowlipshitz}, the condition~\eqref{ceroulorenz} is
not satisfied.

One can also try to find better (than Equation~\eqref{uplowlipshitz})
bounds of $\|\phi_t(x)-\phi_t(y)\|$. But the main issue is that the
continuity in $x$ of the flow $\phi_t(x)$ is not \emph{uniform in
  time}. Thus even though $\phi_t(x)$ is bounded uniformly in $t$,
i.e., $\|\phi_t(x)-\phi_t(y)\| \leq K$ where $K$ is the diameter of
$U$ (see Assumptions~\ref{a2}), it is not true that
\begin{align*}
  \|\phi_t(x)-\phi_t(y)\| \leq K_t \|x-y\|, \textrm{ for $K_t$
  uniformly bounded (in $t$).}
\end{align*}
Also, note that the lower bound in~\eqref{cerouassump} is also a
problem: for example, when the dynamics is dissipative, \emph{i.e.,}
$\nabla.F < 0$, where $F$ is the vector field of~\eqref{lor1}, then in
any ball (say, of radius $r$) $B_r(x)$ around $x \in U$, there is a
$y \in B_r(x)$ different from $x$ such that the following holds:
\begin{align*}
\|\phi_t(x) - \phi_t(y)\| \to 0 \textrm{ as $t \to \infty$}.
\end{align*}
In conclusion, it is clear that such models do not satisfy
Equation~\eqref{cerouassump} even when $h(t,x)=G(t)x$. But as we show
in Section~\ref{examplenoncompact}, these models do satisfy our
assumptions, in particular~\eqref{a1}.

The key difference between our observability assumption and that
of~\cite{cerou2000long} is that we use~\eqref{3} in
Assumption~\ref{a1}, instead of~\eqref{cerouassump}. Notice that
Equation~\eqref{3} involves $\phi_{s-t}$ with $t\leq s\leq t+\tau$,
and continuity in $x$ of $\phi_{s-t}(x)$ is uniform in $s-t$ for
$0\leq s-t\leq \tau$. Subsequently, we get Equation~\eqref{uplowbound}
which is crucial for our analysis. We also notice that the
bounds~\eqref{cerouassump} are replaced in our work by those
in~\eqref{uplowbound} in terms of $D_N$ defined in~\eqref{DN} and
$d_N$ defined in~\eqref{dN}, which occur naturally in dynamical
systems theory, allowing us to use their properties (along with other
Assumptions~\ref{a2},~\ref{a3},~\ref{a22} and~\ref{a4}) to
help us handle the case when the dynamics may be chaotic.

\section{Discrete time nonlinear filter}\label{S4}
\editc{To study the stability of the filter in discrete time,} we will \editc{set up} the discrete time filter in the
form where the filter at any time instant depends on the entire
observation sequence up to that instant. This form of the filter can
be easily converted (using an appropriate transformation of
observations) to the recursive form of the filter that is commonly
used in applications. We use the setup below in order to keep the
notation entirely parallel to the continuous time case we have
discussed until now.
\subsection{Setup}
Again, let the state space $X$ be $p$-dimensional complete Riemannian
manifold with metric $d$. On $X$, we have a homeomorphism
$T: X \rightarrow X$ along with initial condition $x_0$, whose
distribution is $\mu$. We denote discrete time with $k$. These
dynamics are observed partially in the following way.
\begin{align}\nonumber
  Y_k&= \sum_{i=1}^k h\left(i,T^i(x_0)\right)+W_k,
\end{align}
where, $h:\bb{Z}^+\times X\rightarrow \bb{R}^n$ and
$Y_k\in \mathbb{R}^n$ is the observation process and
$W_k\in \mathbb{R}^n$ is the position of an i.i.d random walk with
standard Gaussian increment after $k$ steps, starting at
origin. Moreover, $x_0$ and $W_{k+1}-W_{k}$ are assumed to be
independent for any $k\geq 1$. Therefore,
\begin{align*}
  \left\{X\times \left(\bb{R}^n\right)^{\bb{Z}^+},\bb{B}(X)\otimes \bb{B}\left((\bb{R}^n)^{\bb{Z}^+}\right),\bb{P}=\mu\otimes \bb{P}_W\right\}
\end{align*} 
is considered to be our probability space. Here, $\bb{B}(.)$ denotes
the borel $\sigma$-algebra of the corresponding space and $\bb{P}_W$
is the probability measure of $W$. Let
$\mathcal{F}^{y}_k=\sigma\left\{Y_i:0\leq s\leq k,\; i\in
  \bb{Z}^+\right\}$, the observation process filtration. We shall see
that the results of stability for the case of continuous time extend
to the discrete time case with very minor changes. Noting this, we
denote all the quantities that appear in both continuous and discrete
time cases by same symbols.
\begin{note}
  $\pi^0_k,\; \pi_k$ and $\bar{\pi}_k$ have similar meanings to what
  they mean in continuous time case.
\end{note}
Define,
\begin{align}\nonumber
  Z(k,x,Y_{0:k}):= \exp\left(\sum_{i=1}^k h\left(i,T^i(x)\right)^T\left(Y_i-Y_{i-1}\right)-\frac{1}{2}\sum_{i=1}^k \left\|h\left(i,T^i(x)\right)\right\|^2 \right),
\end{align}
with the convention that $\sum_{1}^0:=0$. From Bayes' rule, for any
bounded continuous function $g$,
\begin{align}\label{dinit}
  \pi^0_k(g)=\mathbb{E}\left[g(x_0)|\mathcal{F}^y_k\right]=\frac{\int_X g(x)Z(k,x,Y_{0:k})\mu(dx)}{\int_{X} Z(k,x,Y_{0:k})\mu(dx)}
\end{align}
For a fixed $k$, the filter is given by
\begin{align}\nonumber
  {\pi}_k(g)=\mathbb{E}\left[g(T^k(x_0))|\mathcal{F}^y_k\right]=    \frac{\int_{X}g(T^k(x))Z(k,x,Y_{0:k})\mu(dx)}{\int_{X}Z(k,x,Y_{0:k})\mu(dx)}
\end{align}
Choosing an incorrect initial condition with law $\nu$, expression for
the corresponding incorrect filter is given by
\begin{align}\label{dfil}
  \bar{\pi}_k(g)= \frac{\int_X g\left(T^k(x)\right)Z(k,x,Y_{0:k})\nu(dx)}{\int_{X}Z(k,x,Y_{0:k})\nu(dx)}
\end{align}

\subsection{Stability of the filter}
In the discrete time case, as earlier, stability of the filter is achieved if we
show that, for any bounded continuous $g:X\rightarrow \mathbb{R}$,
\begin{align}\nonumber
  \lim_{k\to \infty} \mathbb{E}[\left|\pi_k(g)-\bar{\pi}_k(g)\right|]=0
\end{align}

To establish the above, we need a discrete analog of
Theorem~\ref{th1}. This can be done under the following discrete
analogs of Assumptions~\ref{a1},~\ref{a2},~\ref{a3}. Again note that
we use same symbols for the quantities that appear in both the cases.
\begin{asu}\label{ad2}
	There exists a bounded open set $U$ such that
	$\overline{T U}\subset U $.
\end{asu}
\begin{asu}\label{ad3}
	$\forall x,\;y\in U$, we have  $d(T x,T y)\leq C d(x,y)$, for some $C>1$.
\end{asu}
\begin{asu}\label{ad1}
  There exists $\rho_k,\;R,\;k_0>0$ such that $\forall x_1,x_2\in U$
  \begin{align}\label{d3}
    \forall k\geq 0,\; \rho_k d(x_1,x_2)^2\leq \sum_{i=k}^{k+k_0} \left\|h\left(i,T^{i-k}(x_1)\right)-h\left(i,T^{i-k}(x_2)\right)\right\|^2  \leq R\rho_k d(x_1,x_2)^2,
  \end{align} 
  where, $\rho_k$ is a positive non-decreasing function such that
  $\lim_{k\to\infty}\frac{\sum_{i=0}^k \rho_{i}}{\rho_k}=\infty$,
  $\frac{\editc{k\rho_k}}{\sum_{i=0}^k \rho_{i}}\leq C'$ (for some $C'>0$) and
  $R>1$.
\end{asu}

\begin{asu}\label{ad22}
\editc{  For $\mathcal{V} \subset U \times U$ , where  $(U\times U)\backslash \mathcal{V}$ is a $\sigma$-null measure set, and for $(x, y) \in \mathcal{V}$, satisfying $d(x, y) \geq b > 0$, the following holds}
  \begin{align*}
D^2_N(x,y) \geq L^2(b)\sum_{i=0}^N\rho_{i\tau},
\end{align*}   
where, $L(b)$ is a positive constant.
\end{asu}
\begin{asu}\label{ad4}
$supp(\mu)\subset U$	
\end{asu}
It follows from Assumption~\ref{ad1} that  
\begin{align}\label{duplowbound}
  \sum_{i=1}^N \rho_{ik_0} d\left(T^i(x),T^i(y)\right)^2\leq \sum_{i=0}^{k} \left\|h\left(T^i(x)\right)-h\left(T^i(y)\right)\right\|^2 \leq R\sum_{i=1}^{N+1}\rho_{ik_0} d\left(T^i(x),T^i(y)\right)^2,\;\; \forall x,y\in U, 
\end{align}
where, $N=\lfloor\frac{k}{k_0}\rfloor$.
\begin{remark}
  The significance of the above assumptions is exactly the same as
  that of the assumptions in Section~\ref{S3}.
\end{remark}
Now we state the discrete analogs of Theorem~\ref{th1},
Proposition~\ref{pr1} and Theorem~\ref{stab}.
\begin{thm}\label{thd1}
  Suppose $\mu$ is absolutely continuous with respect to volume, $\sigma$
  of $X$ and $\frac{d\mu}{d\sigma}$ is continuous on the support of
  $\mu$. Under the Assumptions~\ref{ad2}---\ref{ad4},
  \begin{align*}
    \editc{\plim_{k\to\infty}}e^{\alpha k}\left(\pi^0_k \left(\left\{x \in X : d(x,x_0)\leq a\right\}\right)-1\right)=    0, \;\; \forall a>0,\;\;
  \end{align*}
  and for some $\alpha:=\alpha(a)>0$ which depends only on $a$.
\end{thm}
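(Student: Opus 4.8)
The plan is to replicate, step by step, the argument used for Theorem~\ref{th1}, with sums replacing integrals and the homeomorphism $T$ playing the role of the time-$\tau$ flow $\phi_\tau$. First I would write down the analog of the key representation: substituting $Y_i - Y_{i-1} = h(i,T^i(x_0)) + (W_i - W_{i-1})$ into \eqref{dinit} and cancelling the $x$-independent factor, one gets for any $A \in \bb{B}(X)$
\begin{align*}
\pi^0_k(A) = \frac{\int_A \exp\left(\sum_{i=1}^k A_i(x,x_0)^T(W_i-W_{i-1}) - \tfrac12 \sum_{i=1}^k \|A_i(x,x_0)\|^2\right)\mu(dx)}{\int_X \exp\left(\sum_{i=1}^k A_i(x,x_0)^T(W_i-W_{i-1}) - \tfrac12 \sum_{i=1}^k \|A_i(x,x_0)\|^2\right)\mu(dx)},
\end{align*}
where $A_i(x,x_0) := h(i,T^i(x)) - h(i,T^i(x_0))$. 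The inner ``stochastic integral'' $\sum_i A_i(x,x_0)^T(W_i-W_{i-1})$ is now a finite Gaussian sum, which is even easier to handle than the Itô integral; its conditional (on $x_0$) covariance pseudo-metric is $\bar d_k(x,y)^2 = \sum_{i=1}^k \|h(i,T^i(x)) - h(i,T^i(y))\|^2$, and \eqref{duplowbound} bounds this above by $R\,D_{N+1}(x,y)^2$ exactly as \eqref{uplowbound} does in continuous time.

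Next I would bound $\pi^0_k(B_a(x_0)^c)$ from above by the discrete analog of \eqref{upper}: the numerator is integrated over $B_a(x_0)^c$ with the lower bound of \eqref{duplowbound} in the exponent, the denominator over $Q_N(r,x_0)$ with the upper bound. The same two suprema as in \eqref{eq:sup1}---\eqref{eq:sup2} appear, and Lemmas~\ref{auxlem1}, \ref{auxlem2} and~\eqref{exp0} go through verbatim: the Gaussian concentration bound \cite[Theorem 6.1]{ledoux1996isoperimetry} still applies, the covering-number estimate of Lemma~\ref{covestimate} applies to $T$ with the same constants $q_{\mathcal{K}(a)}, b_{\mathcal{K}(a)}$ (it is stated for a general homeomorphism/spanning set), and the conditions on $\rho_k$ in Assumption~\ref{ad1} are precisely what is needed for the ratio $N\rho_{Nk_0}/\sum_{i=0}^{N-1}\rho_{ik_0}$ to stay bounded and for $S(N,a)/\sum_{i=0}^{N-1}\rho_{ik_0} \to 0$. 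Then I would reproduce Lemma~\ref{auxlem3}: along a subsubsequence on which both $\Delta_a(x_0,t_{j_q})$ and $\Gamma_r(x_0,t_{j_q})/\sum\rho_{ik_0}$ converge a.s.\ to $0$, the numerator is bounded by $\exp(-\tfrac14 L^2(a)\sum_{i=0}^{N-1}\rho_{ik_0})$ via Assumption~\ref{ad22}, while the denominator is bounded below using absolute continuity of $\mu$ and the volume estimate $B_{r/C^N}(x_0) \subset Q_N(r,x_0)$ coming from Assumption~\ref{ad3} (here $d_N(x,y) \le C^N d(x,y)$). The conditions $\lim_k \sum_{i=0}^k\rho_i/\rho_k = \infty$ make the exponent ultimately negative with a rate linear in $k$, giving the claimed $e^{\alpha k}$ decay in probability.

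Since the subsequence is arbitrary, $\plim_{k\to\infty} e^{\alpha k}\pi^0_k(B_a(x_0)^c) = 0$, which is the assertion. I do not anticipate a genuine obstacle here, as the argument is structurally identical to the continuous-time proof; the only points needing a little care are purely bookkeeping: the shift of index by one in \eqref{duplowbound} (sums run from $i=1$ rather than $i=0$, and the upper bound has $N+1$ terms), verifying that the Gaussian-process tools apply to a finite sum of independent Gaussian increments (trivial), and checking that the conditions on the new $\rho_k$ — which lacks the explicit $\tau$-scaling but is otherwise the same — still drive all the limits. One should also note the minor inconsistency in Assumption~\ref{ad22}, which is stated with $\rho_{i\tau}$ rather than $\rho_{ik_0}$; this is evidently a typo and should read $\sum_{i=0}^N \rho_{ik_0}$, with $D_N$ defined using $T^{ik_0}$, so that it matches \eqref{duplowbound}.
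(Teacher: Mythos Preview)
Your proposal is correct and is exactly what the paper does: it states that the proof of Theorem~\ref{thd1} ``follows exactly in the same lines as that of Theorem~\ref{th1}'' and omits it entirely. Your detailed transcription of the continuous-time argument to the discrete setting, including the bookkeeping remarks on the index shift in~\eqref{duplowbound} and the evident typo in Assumption~\ref{ad22}, is precisely the intended argument.
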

\begin{proof}
  The proof of this theorem follows exactly in the same lines as that
  of Theorem~\ref{th1}. So the proof is omitted.
\end{proof}
\begin{prop}\label{prd1}
  Under the hypotheses of Theorem~\ref{thd1},
  \begin{align*}
    \editc{\plim_{k\to \infty}}\pi^0_k (A)= 0,\;\; \forall A \in\bb{B}(X),\;\;x_0\notin A
  \end{align*}
\end{prop}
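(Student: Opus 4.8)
The plan is to mirror, essentially verbatim, the argument given for Proposition~\ref{pr1} in the continuous time setting, with Theorem~\ref{thd1} playing the role of Theorem~\ref{th1} and the discrete index $k$ replacing the continuous parameter $t$. Since all the work that uses the dynamical Assumptions~\ref{ad2}--\ref{ad4} has already been absorbed into Theorem~\ref{thd1}, what remains is a purely measure-theoretic bootstrap from concentration on closed balls to concentration on an arbitrary Borel set, plus some topological bookkeeping.

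First I would fix an arbitrary sequence $k_j \to \infty$. By Theorem~\ref{thd1}, for each fixed $a>0$ the quantity $e^{\alpha(a)k_j}(\pi^0_{k_j}(\{x : d(x,x_0) \le a\}) - 1)$ tends to $0$ in probability, so along a further subsequence (still written $k_j$) it tends to $0$ $\bb{P}$-a.s., and I can arrange this simultaneously for the finitely many radii that will be needed. Next, exactly as in Proposition~\ref{pr1}, I would pass from closed to open balls: for $\rho < a$ and $\gamma = \min\{\alpha(a), \alpha(a-\rho)\}$, sandwiching $\pi^0_{k_j}(B_{a-\rho}(x_0)) \le \pi^0_{k_j}(\{x : d(x,x_0) < a\}) \le \pi^0_{k_j}(B_a(x_0))$ and letting $j \to \infty$ yields $\pi^0_{k_j}(\{x : d(x,x_0) < a\}) \to 1$ $\bb{P}$-a.s. for every $a>0$. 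From here the bookkeeping is identical to the continuous case: any open set $\mathcal{O} \ni x_0$ contains a small ball $\{x : d(x,x_0) < r\}$, so $\pi^0_{k_j}(\mathcal{O}^c) \to 0$ a.s.; any open set missing $x_0$ is eventually disjoint from such a ball, so its mass tends to $0$ a.s.; and complementation gives $\pi^0_{k_j}(\mathcal{C}) \to 1$ a.s. when $x_0 \in \mathcal{C}$ and $\pi^0_{k_j}(\mathcal{C}) \to 0$ a.s. when $x_0 \notin \mathcal{C}$, for closed $\mathcal{C}$.

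Finally, to reach an arbitrary $A \in \bb{B}(X)$ with $x_0 \notin A$, I would invoke regularity of Borel probability measures on a metric space, \cite[Theorem~1.1]{billingsley1999convergence}, to choose a closed $\mathcal{C}$ and an open $\mathcal{O}$ with $\mathcal{C} \subset A \subset \mathcal{O}$ and $\pi^0_{k_j}(\mathcal{O} \setminus \mathcal{C}) < 1/2$, and then run the same contradiction argument as in Proposition~\ref{pr1} to conclude $\pi^0_{k_j}(A) \to 0$ $\bb{P}$-a.s.; since the initial sequence $k_j$ was arbitrary, this gives $\plim_{k\to\infty}\pi^0_k(A) = 0$. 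The only point needing care — the same one flagged in the proof of Proposition~\ref{pr1} — is that the almost-sure statements must be invoked only finitely many times along one common sub-subsequence, so that the finitely many radii and the closed/open pair produced by Billingsley's theorem are all controlled together. I do not expect any genuine new obstacle beyond what is already resolved by Theorem~\ref{thd1}, so in practice one would simply write that the proof follows the lines of Proposition~\ref{pr1}.
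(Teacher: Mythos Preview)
Your proposal is correct and matches the paper's own approach exactly: the paper's proof of Proposition~\ref{prd1} simply states that the proof of Proposition~\ref{pr1} carries over unchanged upon replacing continuous time by discrete time. Your detailed walkthrough reproduces precisely that argument, and your concluding remark that ``one would simply write that the proof follows the lines of Proposition~\ref{pr1}'' is in fact what the paper does.
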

\begin{proof}
  We observe that the proof of Proposition~\ref{pr1} remains unchanged
  if the continuous time is replaced with discrete time.
\end{proof}
\begin{thm}\label{stabd}
  Under the hypotheses of Theorem~\ref{th1}, If $\mu$ and $\nu$ are equivalent, then for
  any bounded continuous $g:X\rightarrow \bb{R}$,
\begin{align}\nonumber
\lim_{k\to\infty} \bb{E}\left[\left|\pi_k(g)-\bar{\pi}_k(g)\right|\right]=0\
\end{align}.   
\end{thm}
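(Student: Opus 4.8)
The plan is to transcribe the proof of Theorem~\ref{stab} to the discrete-time setting, using the discrete smoother and filter~\eqref{dinit}--\eqref{dfil} in place of their continuous-time analogues and invoking Proposition~\ref{prd1} wherever the continuous-time argument uses Proposition~\ref{pr1}; since nothing in that argument exploits continuity of time, no genuinely new idea should be needed. Concretely, I would set $J:=\tfrac{d\mu}{d\nu}$, which exists and is $\bb{P}$-a.s.\ strictly positive because $\mu$ and $\nu$ are equivalent, and recall that $\bb{E}[J(x_0)\mid\mathcal{F}^y_k]=\pi^0_k(J)$ by the discrete Bayes/Kallianpur--Striebel formula~\eqref{dinit}.

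The first step is the change-of-measure reduction. Exactly as in the display chain ending at~\eqref{final}, writing $\bar{\pi}_k$ via~\eqref{dfil} in terms of $\mu$ and $J$, clearing denominators, and using that $\bb{E}[J(x_0)\mid\mathcal{F}^y_k]$ is $\mathcal{F}^y_k$-measurable together with the conditional Jensen inequality, one obtains
\[
\pi_k(g)-\bar{\pi}_k(g)=\frac{\bb{E}\!\bigl[\,g(T^k(x_0))\bigl(\bb{E}[J(x_0)\mid\mathcal{F}^y_k]-J(x_0)\bigr)\,\big|\,\mathcal{F}^y_k\bigr]}{\bb{E}[J(x_0)\mid\mathcal{F}^y_k]},
\]
hence, with $M:=\sup_{x\in X}|g(x)|<\infty$,
\[
\bb{E}\bigl[\,\lvert\pi_k(g)-\bar{\pi}_k(g)\rvert\,\bigr]\le M\,\bb{E}\!\left[\frac{\bigl\lvert\bb{E}[J(x_0)\mid\mathcal{F}^y_k]-J(x_0)\bigr\rvert}{\bb{E}[J(x_0)\mid\mathcal{F}^y_k]}\right].
\]

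The second step is to show that the right-hand side vanishes. By the martingale convergence theorem, $\bb{E}[J(x_0)\mid\mathcal{F}^y_k]$ converges $\bb{P}$-a.s.\ and in $L^1$ to $\bb{E}[J(x_0)\mid\mathcal{F}^y_\infty]$; applying Proposition~\ref{prd1} to $\pi^0_k(A)=\bb{E}[\mathbf{1}_A(x_0)\mid\mathcal{F}^y_k]$ for $A\in\bb{B}(X)$ and passing to a common a.s.-convergent subsequence identifies this limit as $J(x_0)$, i.e.\ the discrete analogue of~\eqref{mctconseq}: $\bb{E}[J(x_0)\mid\mathcal{F}^y_k]\to J(x_0)$ $\bb{P}$-a.s.\ and in $L^1$. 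Then, along an arbitrary subsequence $k_n\uparrow\infty$, I would apply Scheffé's Lemma~\ref{sche} with $m=\bb{P}$, $f_n:=J(x_0)/\bb{E}[J(x_0)\mid\mathcal{F}^y_{k_n}]$ and $f\equiv 1$: here $f_n\to f$ $\bb{P}$-a.s.\ by the previous line (using $J(x_0)>0$ a.s.), and $\bb{E}[f_n]=1=\bb{E}[f]$ by the tower property, so $\bb{E}[\lvert f_n-f\rvert]\to 0$; thus the bound above tends to $0$ along $k_n$, and since $\bb{E}[\lvert\pi_k(g)-\bar{\pi}_k(g)\rvert]$ is a deterministic sequence and $k_n$ was arbitrary, the full limit is $0$.

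The only genuinely substantive input is Proposition~\ref{prd1} (equivalently Theorem~\ref{thd1}), the discrete-time concentration of the smoother at $x_0$, which in turn rests on the discrete Assumptions~\ref{ad2}--\ref{ad4}; once that is in hand, the present theorem is a purely measure-theoretic consequence. The ``hard part'' is therefore not an obstacle so much as a verification task: one must check that the Kallianpur--Striebel change-of-measure identity from~\eqref{dinit}, the martingale convergence argument, and the Scheffé step all carry over verbatim from the continuous-time proof, which they do since none of them uses continuity of the time parameter.
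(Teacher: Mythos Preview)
Your proposal is correct and follows exactly the approach the paper intends: the paper's own proof of Theorem~\ref{stabd} is simply ``omitted as it is exactly in the same lines as that of Theorem~\ref{stab},'' and you have faithfully transcribed that argument to discrete time, invoking Proposition~\ref{prd1} in place of Proposition~\ref{pr1} and otherwise reproducing the change-of-measure bound~\eqref{final}, the martingale-convergence identification~\eqref{mctconseq}, and the Scheff\'e step. You even spell out the verification $\bb{E}[f_n]=1$ needed for Lemma~\ref{sche}, which the paper leaves implicit.
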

\begin{proof}
  Proof is again omitted as it is exactly in the same lines as that of
  Theorem~\ref{stab}.
\end{proof}
\begin{remark}\label{r3}
  Remarks analogous to Remark~\ref{r2} and the rest of the remarks of
  Section~\ref{S3} follow in the case of discrete time.
\end{remark}

\section{Structure of the conditional distribution}\label{S5}

In this section, we will see that the conditional distribution of $x_t$ after
large times puts most of its mass on the topological attractor.
We restrict ourselves to the case of continuous time filter (similar
conclusions can be drawn for discrete time case as well). 
Recall that topological attractor $\Lambda$ is defined
(e.g. \cite[Pg. 128]{katok1996introduction}) as
\begin{align*}
\Lambda:= \cap_{t\geq 0} \phi_t(U),
\end{align*}
where $U$ is an open set such that $\overline{\phi_t(U)}\subset U$,
for $t>0$, as introduced in Assumption~\ref{a2}. We make a further
assumption:
\begin{asu}\label{alast}
  $\forall x\in X$, there exists $t(x)\geq 0$ given by
  $t(x):= \inf\{t\geq 0: \phi_{t}(x)\in U\}$.
\end{asu}
\begin{thm}
  Under the Assumption~\ref{alast},
  \begin{align*}
    \lim_{t\to \infty}	\pi_t(\Lambda_{s})=1,\; \forall s\geq 0,
  \end{align*}
  where, 
  \begin{align*}
    \Lambda_s:=\cap_{0\leq r\leq s} \phi_r(U),
  \end{align*}
  for $s\geq 0$.
\end{thm}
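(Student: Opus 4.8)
The plan is to transfer the claim from the filter $\pi_t$ to the smoother $\pi^0_t$, and then to reduce it to an elementary statement about the first entrance times $t(x)$ into $U$ supplied by Assumption~\ref{alast}. Comparing \eqref{smoothcontdist} with \eqref{filcontdist}, one sees that for any bounded measurable $g$ we have $\pi_t(g)=\pi^0_t(g\circ\phi_t)$; taking $g=\mathbbm{1}_{\Lambda_s}$ gives $1-\pi_t(\Lambda_s)=\pi^0_t\!\big(\phi_t^{-1}(\Lambda_s^c)\big)$, so it suffices to show $\pi^0_t\big(\phi_t^{-1}(\Lambda_s^c)\big)\to 0$.

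The one substantive step is a geometric inclusion. Fix $t>s$ and set $A_t:=\{x\in X:\,t(x)<t-s\}$, where $t(x)=\inf\{u\ge0:\phi_u(x)\in U\}$. I claim $A_t\subset\phi_t^{-1}(\Lambda_s)$. Indeed, if $t(x)<t-s$, pick $u_0\in(t(x),t-s)$ with $\phi_{u_0}(x)\in U$; by the forward invariance $\overline{\phi_r(U)}\subset U$ for $r>0$ (Assumption~\ref{a2}) we get $\phi_u(x)=\phi_{u-u_0}(\phi_{u_0}(x))\in U$ for all $u\ge u_0$. For every $r\in[0,s]$ we have $t-r\ge t-s>u_0$, hence $\phi_{t-r}(x)\in U$, i.e. $\phi_t(x)\in\phi_r(U)$; since $r\in[0,s]$ was arbitrary, $\phi_t(x)\in\bigcap_{0\le r\le s}\phi_r(U)=\Lambda_s$. (That $A_t$ and $\Lambda_s$ are Borel follows from joint continuity of $(u,x)\mapsto\phi_u(x)$ and openness of $U$.) Consequently $\phi_t^{-1}(\Lambda_s^c)\subset A_t^c$, and therefore $0\le 1-\pi_t(\Lambda_s)\le\pi^0_t(A_t^c)$.

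It remains to show $\pi^0_t(A_t^c)\to 0$. The sets $A_t^c=\{x:\,t(x)\ge t-s\}$ decrease as $t\uparrow\infty$, and $\bigcap_{t>s}A_t^c=\{x:\,t(x)=\infty\}=\emptyset$ by Assumption~\ref{alast}, so $\mu(A_t^c)\downarrow 0$ by continuity of the probability measure $\mu$. Since $\pi^0_t(B)=\mathbb{E}[\mathbbm{1}_B(x_0)\,|\,\mathcal{F}^y_t]$ and $A_t^c$ is a deterministic set, the tower property gives $\mathbb{E}[\pi^0_t(A_t^c)]=\mu(A_t^c)$, whence $\mathbb{E}[1-\pi_t(\Lambda_s)]\to 0$; this already yields $\pi_t(\Lambda_s)\to 1$ in $L^1(\mathbb{P})$ and in probability. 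To upgrade to the almost sure statement, note that $\mathbbm{1}_{A_t^c}(x_0)$ decreases to $0$, so for each fixed $t_0$ and all $t\ge t_0$, $\pi^0_t(A_t^c)\le\mathbb{E}[\mathbbm{1}_{A_{t_0}^c}(x_0)\,|\,\mathcal{F}^y_t]$; by the martingale convergence theorem (as used in the proof of Theorem~\ref{stab}) the right-hand side converges $\mathbb{P}$-a.s. to $\mathbb{E}[\mathbbm{1}_{A_{t_0}^c}(x_0)\,|\,\mathcal{F}^y_\infty]$, and letting $t_0\to\infty$ this tends to $0$ $\mathbb{P}$-a.s. by conditional monotone convergence, so $\pi_t(\Lambda_s)\to1$ $\mathbb{P}$-a.s.

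I do not foresee a genuine obstacle: the only delicate points are the inclusion $A_t\subset\phi_t^{-1}(\Lambda_s)$, which is exactly where Assumption~\ref{a2} (forward invariance) and Assumption~\ref{alast} (finiteness of $t(x)$) enter, and the interchange of limits in the almost sure refinement. Note that, unlike Theorems~\ref{th1} and~\ref{stab}, this argument uses neither absolute continuity of $\mu$ nor the observability Assumptions~\ref{a1} and~\ref{a22}: only the trapping region $U$ and Assumption~\ref{alast} are needed.
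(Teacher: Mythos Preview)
Your proof is correct and shares the paper's core observation: since $\pi_t(\Lambda_s)=\pi^0_t(\phi_t^{-1}(\Lambda_s))$, the problem reduces to showing $\mu(\phi_t^{-1}(\Lambda_s))\to 1$ via entrance times. The paper writes $\pi_t$ in terms of $\mu\circ\phi_{-t}$ and partitions $X$ by the first entrance time into $\Lambda_s$ itself (the sets $U_r^s$), then computes $\mu\circ\phi_{-t}(\Lambda_s)=\mu(\cup_{0\le r\le t}U_r^s)\to 1$; you instead use the first entrance time $t(x)$ into $U$ and derive the inclusion $\{t(x)<t-s\}\subset\phi_t^{-1}(\Lambda_s)$ from forward invariance. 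These are essentially equivalent (the entrance time into $\Lambda_s$ is at most $t(x)+s$, which is how the paper implicitly uses Assumption~\ref{alast}).

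Where your route genuinely adds something is the passage from $\mu(\phi_t^{-1}(\Lambda_s^c))\to 0$ to $\pi_t(\Lambda_s)\to 1$. The paper invokes a support argument (``support of $\pi_t$ is contained in the support of $\mu\circ\phi_{-t}$''), which only yields $\pi_t(\Lambda_s)=1$ when $\mu\circ\phi_{-t}(\Lambda_s^c)=0$ exactly, not merely in the limit; the mode of convergence is left implicit. Your use of the tower property to get $\mathbb{E}[1-\pi_t(\Lambda_s)]\le\mu(A_t^c)\to 0$, followed by the martingale bound $\pi^0_t(A_t^c)\le\mathbb{E}[\mathbbm{1}_{A_{t_0}^c}(x_0)\mid\mathcal{F}^y_t]$ and conditional monotone convergence, makes both $L^1$ and almost sure convergence explicit. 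Your remark that neither absolute continuity of $\mu$ nor the observability assumptions are needed is also accurate and matches the paper's hypotheses.
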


\begin{proof}
  From \eqref{smoothcontdist}, for any $A\in \bb{B}(X)$, we have
  \begin{align*}
    {\pi}^0_t(A)
    &=\mathbb{E}[\mathbbm{1}_{\{x_0\in A\}}|\mathcal{F}^y_t]\\
    &= \frac{\int_A Z \left(t, x, Y_{\left[0,t\right]}
      \right) \mu \left(dx \right)}{\int_{X} Z \left(t, x,
      Y_{\left[0,t\right]} \right) \mu \left( dx \right)}
  \end{align*}

  From \eqref{filcontdist}, for any $A\in \bb{B}(X)$, we have
  \begin{align}\label{support}
    {\pi}_t(A) = \mathbb{E}\left[\mathbbm{1}_{\{\phi_t(x_0)\in
    A\}}|\mathcal{F}^y_t\right]
    & = \frac{\int_{\{\phi_t(x)\in
      A\}}Z(t,x,Y_{[0,t]})\mu(dx)}{\int_{X}Z(t,x,Y_{[0,t]})\mu(dx)}\\
    &=\frac{\int_{A}Z(t,\phi_{-t}(y),Y_{[0,t]})\mu\circ
      \phi_{-t}(dy)}{\int_{X}Z(t,\phi_{-t}(y),Y_{[0,t]})\mu\circ
      \phi_{-t}(dy)}
  \end{align}
  Therefore, support of ${\pi}_t$ is always contained in the support
  of $\mu\circ \phi_{-t}$. So, it is sufficient to show that
  asymptotically the support of $\mu\circ \phi_{-t}$ is near the
  topological attractor (\emph{i.e., $\Lambda_{s}$}) to conclude that
  after large times, ${\pi}_t$ puts negligible mass far away from the
  topological attractor.

  To that end, we define the following disjoint family of sets,
  $\{U^s_r\}_{r\geq 0}$, for a given $m$:
  \begin{align*}
    U_r^s:=\left\{x\in X: \inf\left\{t\geq 0: \phi_t(x) \in
    \Lambda_s\right\}=r\right\}.
  \end{align*}
  From the Assumption~\ref{alast}, for any given $s\geq 0$, it
  follows that
  \begin{align*}
    X=\cup_{r\geq 0} U_r^s
  \end{align*}
  Now, for a given $s\geq 0$ and $t\geq s$, consider
  \begin{align*}
    \mu\circ \phi_{-t}(\Lambda_{s})
    &=\mu\left(\left\{ x\in X: \phi_t(x)\in \Lambda_s\right\}\right)\\
    &=\mu\left(\left\{ x\in X: \inf\left\{r\geq 0:\phi_r(x)\in
      \Lambda_s\right\}\leq t \right\}\right)\\
    &=\mu\left(\cup_{0\leq r\leq t} U_r^s\right)\\
  \end{align*}
  From above, we have
  $\lim_{t\to\infty}\mu\circ \phi_{-t}(\Lambda_{s})=1,\;\forall s\geq
  0$. Note that this is not a uniform limit in $s\geq 0$. This
  concludes that asymptotically ${\pi}_t$ is supported on $\Lambda_s$
  for every $s\geq 0$.
\end{proof}

\begin{remark}\label{remsupp}
  If $\mu$ \editc{has a} bounded support, then following the computations
  above, we can conclude that $\mu\circ \phi_{-t}$ is supported on $U$
  after some finite time. To see this, note that $\Lambda_0=U$ and let
  $S:= \inf\{s\geq 0: supp(\mu)\subset \cup_{0\leq r\leq
    s}U_r^{0}\}$. Now, we have
  \begin{align*}
    \mu\circ \phi_{-S}(U)
    &=\mu\left(\left\{ x\in X:\phi_{S}(x)\in U\right\}\right)\\
    &=\mu\left(\left\{ x\in X: \inf\left\{r\geq 0:\phi_r(x)\in
      U\right\}\leq S \right\}\right)\\
    &=\mu\left(\cup_{0\leq r\leq S} U_r^0\right)\\
    &=1.
  \end{align*}
  Therefore, $\pi_t$ is also supported on $U$ after some finite time
  $S$.
\end{remark}

\begin{remark}
  In the above computations, it is clear that $\mu$ can be replaced by
  $\nu$ (or any other probability measure) to arrive at similar
  conclusions.
\end{remark}

\begin{remark}\label{initisupp}
  To summarize, under the Assumption~\ref{alast}, any probability
  measure $m$ (with bounded support) evolved under the flow
  $\{\phi_t\}_{t\geq 0}$ is supported entirely on $U$, after some
  finite time. In practice, the system of interest would have already
  been evolved for long time before we started observing the system
  and many systems of interest satisfy
  Assumption~\ref{alast}. Therefore, it is reasonable to have
  Assumption~\ref{a4}.
\end{remark}

\section{Examples and Discussions}\label{S7}
\subsection{Examples with compact state space}\label{examplecompact}
We consider $(X,d)$ to be compact and
$h(.,.):\bb{R}^+\times X\rightarrow \bb{R}^p$ is such that $h(t,.)$ is
bi-Lipshitz for every $t\geq 0$ that satisfies the following:
\begin{align*}
  K(t)d(x,y)\leq \|h(t,x)-h(t,y)\|\leq R K(t) d(x,y),
\end{align*}  
for some $\alpha>0$,$R>1$, $K(t)$ such that $K(t)=O(t^\alpha)$ and is
increasing in $t$. Since any dynamical system
$\{\phi_t\}_{t\in \bb{R}}$ with $\phi_t$ being a $C^{1+\alpha}$
diffeomorphism on $X$ (with $\alpha>0$, for every $t \in \bb{R}$) is
such that $\phi_t$ is bi-Lipshitz, we have
\begin{align*}
  \frac{1}{MC^t}d(x,y)\leq d(\phi_tx,\phi_ty)\leq MC^t d(x,y),
\end{align*}
$\forall t\in \bb{R}$ and for some $C,M>1$. Now consider the following
expression:
\begin{align*}
  \int_t^{t+\tau} \left\|h\left(s,\phi_{s-t}(x_1)\right)-h\left(s,\phi_{s-t}(x_2)\right)\right\|^2 ds
\end{align*}
From the above, we have
\begin{align*}
  \int_t^{t+\tau} \left\|h\left(s,\phi_{s-t}(x_1)\right)-h\left(s,\phi_{s-t}(x_2)\right)\right\|^2 ds &\leq \int_t^{t+\tau} R^2K^2(s)d\left(\phi_{s-t}(x_1),\phi_{s-t}(x_2)\right)^2 ds\\
&\leq M^2R^2 d(x_1,x_2)^2 \int_t^{t+\tau} K^2(s)C^{2(s-t)} ds
\end{align*}
Similarly we can obtain the following lower bound:
\begin{align*}
  \int_t^{t+\tau} \left\|h\left(s,\phi_{s-t}(x_1)\right)-h\left(s,\phi_{s-t}(x_2)\right)\right\|^2 ds \geq  \frac{1}{M^2} d(x_1,x_2)^2 \int_t^{t+\tau} K^2(s)C^{-2(s-t)} ds
\end{align*}
We consider $K(t)$ to be of the form $=Bt^q$, for some $q>0$. Define
$\rho^1_t:= B^2\int_t^{t+\tau} t^{2q}C^{-2(s-t)} ds$ and
$\rho^2_t:= B^2\int_t^{t+\tau} t^{2q}C^{2(s-t)} ds$. It can be seen
from computing the integrals that
\begin{align*}
  1\leq \frac{\rho^2_t}{\rho^1_t}\leq \bar{M},
\end{align*}  
for some $\bar{M}>1$ independent of $t\geq 0$. It can be seen that
$\rho^1_t\sim O(t^{2q})$. Therefore, by defining $\rho_t$ in
Assumption~\ref{a1} as $\rho_t:=\frac{1}{M}\rho^1_t$, we can conclude
that the above model satisfies both Assumptions~\ref{a1} and
~\ref{a3}. Since $X$ is compact, Assumptions~\ref{a2} hold trivially
by choosing $U$ in Assumption~\ref{a2} as $X$. In the above, we
presented only continuous time models. Models in discrete time can be
constructed similarly.
 
In the following, we give sufficient conditions for
Assumption~\ref{a22} to hold. Recall that Assumption~\ref{a22} says
that there is a set $\mathcal{V}\subset X\times X$ that is of full
measure under $\sigma\otimes\sigma$ such that for $x,y \in \mathcal{V}$
satisfying $d(x,y)\geq b>0$, the following holds
\begin{align}\label{aexp}
  D^2_N(x,y) \geq L^2(b)\sum_{i=0}^N\rho_{i\tau},
\end{align}   
where, $L(b)$ is a positive constant. In the following, we show that
\ref{aexp} holds for a particular type of dynamical systems
\emph{viz.,} uniformly hyperbolic systems
\cite[Definition~4.1]{shub1986global}. The arguments made are
independent of whether time is discrete or continuous. So without loss
in generality, let us suppose that the time is discrete with $T$ being
the homeomorphism.  \setcounter{footnote}{0} Suppose $T$ is a
$C^{1+\alpha}$ uniformly hyperbolic diffeomorphism with
$\alpha>0$. From \cite[Proposition~7.4]{shub1986global}, $T$ is
expansive, \emph{i.e.,} there exists $\epsilon>0$ such that for every
$x,y \in X$ with $x\neq y$, there exists $n \in \bb{Z}$ such that
$d(T^nx,T^ny)>2\epsilon$ (for the clarity in expressions, we write $T^nx$ for $T^n(x)$ in this section). From the continuity of $T$ and compactness
of $X$, we have the following lemma whose proof is provided below for sake of completeness (see \cite{urlunif}):
\begin{lem}\label{uniexp}
  For any $\delta >0$ and for some $\epsilon>0$ \editc{(independent of $\delta$)}, if $x,y \in X$ such
  that $d(x,y)\geq\delta$ then there exists $J\in \bb{N}$ \editc{(independent of $x$ and $y$)} such that
  for some $n \in \bb{Z}$ with $|n|\leq J$, we have
  \begin{align*}
    d(T^nx,T^ny)>\epsilon
  \end{align*}
\end{lem}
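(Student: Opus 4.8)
The plan is to argue by contradiction using a compactness argument. Suppose, for contradiction, that the statement fails for some $\delta > 0$: then for \emph{every} $J \in \bb{N}$ there exist points $x_J, y_J \in X$ with $d(x_J, y_J) \geq \delta$ such that $d(T^n x_J, T^n y_J) \leq \epsilon$ for all $|n| \leq J$, where $\epsilon > 0$ is the expansiveness constant from \cite[Proposition~7.4]{shub1986global}. First I would pass to a convergent subsequence: since $X$ is compact, $X \times X$ is compact, so $(x_J, y_J)$ has a subsequence converging to some $(x_\infty, y_\infty) \in X \times X$; by continuity of $d$ we retain $d(x_\infty, y_\infty) \geq \delta > 0$, so in particular $x_\infty \neq y_\infty$.

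Next I would show that the limit point violates expansiveness. Fix any $n \in \bb{Z}$. For all $J \geq |n|$ in the subsequence we have $d(T^n x_J, T^n y_J) \leq \epsilon$; letting $J \to \infty$ along the subsequence and using continuity of $T^n$ (hence of $(x,y) \mapsto (T^n x, T^n y)$) together with continuity of $d$, we obtain $d(T^n x_\infty, T^n y_\infty) \leq \epsilon$. Since $n \in \bb{Z}$ was arbitrary, $d(T^n x_\infty, T^n y_\infty) \leq \epsilon$ for all $n \in \bb{Z}$. But $x_\infty \neq y_\infty$, so expansiveness of $T$ guarantees some $m \in \bb{Z}$ with $d(T^m x_\infty, T^m y_\infty) > 2\epsilon$, a contradiction. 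Hence the assumed failure is impossible, and the lemma holds: there is a single $J = J(\delta) \in \bb{N}$ that works uniformly over all pairs $x, y$ with $d(x,y) \geq \delta$, and the constant $\epsilon$ is the (fixed) expansiveness constant, independent of $\delta$.

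The only slightly delicate point — and the place where a little care is needed rather than a genuine obstacle — is the quantifier bookkeeping: one must be careful that $\epsilon$ is chosen \emph{first} (as the expansiveness constant, independent of everything), that the negation of the statement is taken with this same $\epsilon$, and that $J$ is the quantity whose existence is being established uniformly in $x, y$. Once the logical structure is pinned down, the argument is a routine ``sequential compactness plus continuity'' extraction, and no estimate beyond continuity of $T^n$ on the compact space $X$ is required. I would also remark that the strict inequality $d(T^m x_\infty, T^m y_\infty) > 2\epsilon$ comfortably beats the non-strict bound $\leq \epsilon$, so the contradiction is clean and does not require optimizing constants.
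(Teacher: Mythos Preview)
Your proof is correct and follows essentially the same idea as the paper's: both are compactness arguments on the set $\{(x,y)\in X\times X: d(x,y)\geq\delta\}$ exploiting continuity of each $T^n$ and the expansiveness constant $\epsilon$. The only difference is cosmetic---the paper argues directly via a finite open subcover (choosing a witnessing $n(x,y)$ for each pair, spreading it to a neighborhood by continuity, and taking $J$ as the maximum $|n|$ over the finite cover), whereas you argue by contradiction via sequential compactness; these are equivalent formulations of the same compactness principle and neither buys anything the other does not.
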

\begin{proof}
  Consider the compact set,
  $K:=\{z=(x,y)\in X\times X: d(x,y)\geq\delta\}$. Choose $x,y\in X$
  such that $d(x,y)\geq \delta$. From expansivity, there exists
  $n(x,y)\in \bb{Z}$ such that
  $d(T^{n(x,y)}x,T^{n(x,y)}y)>\epsilon$. Define,
  $G(.,.): X\times X\rightarrow X\times X$ by
  $G(u,v)=(T^{n(x,y)}u,T^{n(x,y)}v)$. It is clear that $G$ is
  continuous on $X\times X$ and from the continuity of $G$, there is a
  neighbourhood $U({\bar{z}})$ around $\bar{z}=(x,y)$ such that
  $d(T^{n(x,y)}u,T^{n(x,y)}v)>\epsilon,\; \forall (u,v)\in
  U({\bar{z}})$. Since $\bar{z}=(x,y)$ is an arbitrary point in $K$,
  we can cover $K$ by a family of open sets given by
  $\{ U(z)\}_{z\in K}$. From compactness of $K$, there is a finite set
  $\{z_i\}_{i=1}^{k_0}\subset K$ such that
  $K\subset \cup _{i=1}^{k_0} U({z_i})$. Now, defining
  \begin{align*}
    J:= \max _{i=1,..,k_0} \left(|n(x_i,y_i)|:z_i=(x_i,y_i) \right),
  \end{align*} 
  we have the result.
\end{proof}
In particular, if we choose $\delta<\epsilon$, $d(T^nx,T^ny)>\epsilon$
for infinitely many $n\in \bb{Z}$. Suppose, $x$ is in the global
unstable manifold of $y$ such that $d(x,y)>\epsilon$, \emph{i.e.,}
\begin{align*}
  d(T^nx,T^ny)\leq B \lambda^{n}d(x,y),
\end{align*}
where, $n\leq 0$, $B>0$ and $\lambda >1$ (independent of $x$ and
$y$). It is clear that there exists $\bar{N}$ such that
$d(T^nx,T^ny)<\epsilon,\; \forall n\leq -\bar{N}$. Therefore, from the
above lemma, it is clear that if $|n|>\bar{N}$ and
$d(T^nx,T^ny)\geq\epsilon$ then $n>0$. Let $\{n_k(x,y)\}_{k\in\bb{N}}$
be a subsequence such that
$d(T^{n_k(x,y)}x,T^{n_k(x,y)}y)\geq\epsilon$. From the above
discussion, it is clear that $\{n_k(x,y)\}_{k\in\bb{N}}$ is an
infinite set and in particular, $n_k(x,y)>\bar{N}$ infinitely many
times. Therefore, without loss in generality, let us restrict the
attention to $\{n_k(x,y)\}_{k\in\bb{N}}$ such that
$n_k(x,y)\geq \bar{N},\; \forall k\in \bb{N}$. From Lemma~\ref{uniexp}
and above discussion, we have the following:
\begin{align*}
  n_{k+1}(x,y)-n_k(x,y)\leq J.
\end{align*}
Note that $J$ is independent of $x$ and $y$ as long as
$d(x,y)\geq \epsilon$. Therefore, the cardinality of the set
$\{n_k(x,y) \}_{k\in \bb{N}} \cap [\bar{N}+1,2,3,...,\bar{N}+\hat{N}]$
is at least $\lfloor\frac{\hat{N}}{J}\rfloor$, for any
$\hat{N}\in \bb{N}$. As a result, we have the following for
$N> \bar{N}$:
\begin{align}\nonumber
  D^2_N(x,y) &\geq \epsilon\sum_{\substack{k\in \bb{N}, \\ \bar{N}<n_k(x,y)\leq N}}\rho_{n_k(x,y)\tau}+ \sum_{i=0}^{\bar{N}}d(T^ix,T^iy)\rho_{i\tau}
 \geq \epsilon \sum_{i=\bar{N}+1}^{\lfloor\frac{N-\bar{N}}{J}\rfloor +\bar{N}+1}\rho_{i\tau} +\sum_{i=0}^{\bar{N}}d(T^ix,T^iy)\rho_{i\tau}\\\nonumber
 &\geq \min\left(\epsilon, \inf_{\substack{x,y\in X, \\ d(x,y)>\epsilon}}\left(\min_{i\leq \bar{N}}\left(d(T^ix,T^iy)\right)\right)\right)\sum_{i=0}^{\lfloor\frac{N-\bar{N}}{J}\rfloor +\bar{N}+1}\rho_{i\tau}\\\label{i1}
 &\geq \min\left(\epsilon, \inf_{\substack{x,y\in X, \\ d(x,y)>\epsilon}}\left(\min_{i\leq \bar{N}}\left(d(T^ix,T^iy)\right)\right)\right)\sum_{i=0}^{\lfloor\frac{N}{J}\rfloor}\rho_{i\tau}\\\label{i2}
 &\geq G(J)\min\left(\epsilon, \inf_{\substack{x,y\in X, \\ d(x,y)>\epsilon}}\left(\min_{i\leq \bar{N}}\left(d(T^ix,T^iy)\right)\right)\right)\sum_{i=0}^{N}\rho_{i\tau},
\end{align}
where, $G\left( J\right)>0$ depends only on $J$. Inequality~\eqref{i1}
follows from non-decreasing property of $\rho_t$, applying the lowest
bound to any sum up to first $\lfloor\frac{N}{J}\rfloor$ terms of an
subsequence of a non-decreasing sequence and inequality~\eqref{i2}
follows from the form of $\rho_t$. And also, from uniform
hyperbolicity, bi-Lipshitz property of $T$ and $d(x,y)>\epsilon$, for
$n\leq \bar{N}$, we have
\begin{align*}
  d(T^nx,T^ny)&\geq \frac{1}{C^n}d(x,y)\\
              &\geq \frac{1}{C^{\bar{N}}} d(x,y)\\
              & >\frac{1}{C^{\bar{N}}}\epsilon,
\end{align*}
for some $C>1$. Therefore, we have
\begin{align*}
  \inf_{\substack{x,y\in X, \\ d(x,y)>\epsilon}}\left(\min_{i\leq \bar{N}}\left(d(T^ix,T^iy)\right)\right)> \frac{1}{C^{\bar{N}}}\epsilon
\end{align*}
and we have shown that if $x$ lies in the unstable manifold of $y$ and
$d(x,y)>\epsilon$, we have
\begin{align*}
  D^2_N(x,y)\geq \min\left(G\left(\epsilon, J\right), \frac{1}{C^{\bar{N}}}\epsilon \right)\sum_{i=0}^N\rho_{i\tau}
\end{align*}
Now, we extend the above inequality, to $x$ and $y$ when $x$ does not
lie in either global stable or unstable manifolds of $y$. To that end,
from \cite{barreira2013introduction}, it is known that global stable
manifolds form a foliation of $X$ and global unstable manifold through
a given point in $X$ is their transversal. Therefore, for a given $x$
and $y$ such that $d(x,y)>\epsilon$ and $x$ that does not lie in the
stable manifold of $y$, there is a point $z\in X$ contained in the
global unstable manifold of $y$ such that $x$ is the global stable
manifold of $z$ and we have
\begin{align*}
  d(T^nz,T^ny)\leq d(T^nz,T^nx)+d(T^nx,T^ny) 
\end{align*} 
From the property of global stable manifold and Lemma~\ref{uniexp},
there exists $J_1$ such that
$d(T^nz,T^nx)\leq \frac{\epsilon}{2},\; \forall n\geq J_1$. If
$J_1 > J$, we replace $J$ by $J_1$. Choosing $n=n_k(x,y)$, we get
\begin{align*}
  \epsilon<d(T^nz,T^ny)&\leq \frac{\epsilon}{2}+d(T^nx,T^ny) \\
                       &\frac{\epsilon}{2}<d(T^nx,T^ny).
\end{align*} 
Therefore, we have 
\begin{align*}
  D^2_N(x,y)\geq \min\left(G\left(\frac{\epsilon}{2}, J\right), \frac{1}{C^{\bar{N}}}\epsilon\right)\sum_{i=0}^N\rho_{i\tau}.
\end{align*}
Since the global stable manifold is strictly a lower dimensional
manifold due to uniform hyperbolicity, we proved that \eqref{aexp}
holds on a full measure set under measure $\sigma\otimes \sigma$ ($\sigma$ is
the Riemannian volume), which is sufficient for Theorem~\ref{th1} to
hold.

\subsection{Examples with non-compact state space}\label{examplenoncompact} 
We now consider $X=\bb{R}^p$ (which is non-compact) and continuous
time models only. Choose
$h(t,x):=K(t)\bar{h}(x):\bb{R}^+ \times X\rightarrow \bb{R}^p$ with
any bi-Lipshitz $\bar{h}:X\rightarrow \bb{R}^p$ and $K(t)=O(t^q)$ . In
the following, we show that the class of dynamical systems given by
\eqref{lor1} along with the chosen observation model satisfy
Assumptions~\ref{a1}, ~\ref{a2} and ~\ref{a3}. To that end, let
$\phi_t$ be the solution of the ordinary differential equation given
below 
\begin{align}\label{lor1}
  \frac{d}{dt}\phi_t +A\phi_t+B\left(\phi_t,\phi_t\right)=f,
\end{align}
where, $B(.,.):\bb{R}^p\times \bb{R}^p\rightarrow \bb{R}^p$ is
symmetric bi-linear operator such that
$u^TB(u,u)=0,\; \forall u\in \bb{R}^p$ and $A$ is $p\times p$ matrix
such that $u^TAu>\lambda \|u\|^2,\;\forall u\neq 0$. Observe that we
have $|u^T B(v,w)|\leq H\|u\|\|v\|\|w\|$, for some $H$. From
\cite[Remark~2.4]{kelly2014well}, we have the existence of bounded
open set $U$ such that $\overline{\phi_t U}\subset U$. And also, from
\cite[Lemma~2.6]{kelly2014well}, we have the following:
\begin{align}\label{forlip}
  \|\phi_\tau(u)-\phi_\tau(v)\|\leq e^{\gamma \tau}\|u-v\|,
\end{align}
$\forall u\in U$, $\forall v\in \bb{R}^p$ and for some $\gamma= \frac{4(H\bar{R})^2}{\lambda}>0$, where $\bar{R}$ is the least positive number such that $\|\phi_t(x)\|\leq R$, for any $x\in U$.
Defining, $e_t:= \phi_t(u)-\phi_t(v)$, we have
\begin{align*}
  \frac{d}{dt}e_t +Ae_t+B(\phi_t(u),\phi_t(u))-B(\phi_t(v),\phi_t(v))&=0\\
e^T_t \frac{d}{dt}e_t + e^T_tAe_t+e^T_t\left(B\left(\phi_t(u),\phi_t(u)\right)-B\left(\phi_t(v),\phi_t(u)\right)\right)&=0\\
 \frac{1}{2}\frac{d}{dt}\|e_t\|^2 + e^T_tAe_t+2 e^T_t\left(B\left(\phi_t(u),e_t\right)-B\left(e_t,e_t\right)\right)&= 0\\
\frac{1}{2} \frac{d}{dt}\|e_t\|^2 + \|A\|\|e_t\|^2-2H \|e_t\|^2 \|\phi_t(u)\|&\geq 0\\
 \frac{d}{dt}\|e_t\|^2 + \left(2\|A\|+4H R_U\right)\|e_t\|^2 &\geq 0,
\end{align*}
where, $R_U:=\sup_{u\in U}\sup_{t\geq 0} \|\phi_t(u)\|$ and we used the
properties of $A$ and $B(.,.)$. We integrate the above equation to
get,
\begin{align*}
  \|e_t\|^2\geq \|e_0\| - \left(2\|A\|+4H R_U\right)\int_0^t \|e_s\|^2 ds
\end{align*}
Applying the inequality from \cite[Lemma~2]{lowgorn}, we have
\begin{align}\label{baclip}
  \|\phi_\tau \left(u\right)-\phi_\tau\left(v\right)\|\geq \exp\left(-\left(\|A\|+2H R_U\right)\tau\right)\|u-v\|
\end{align}
From the above, it is clear that Assumptions~\ref{a2} and ~\ref{a3}
hold. From the calculations similar to those in
Section~\ref{examplecompact}, we can conclude that if $x_1, x_2\in U$, then Assumptions~\ref{a1} holds. In the above, we have shown that Assumption~\ref{a1} holds in a trivial case. Thus we only need to check the validity of assumption~\ref{a22} in this case. 

In the following, we discuss two well-known models, \emph{viz.,}
Lorenz 96 model and Lorenz 63 model and give numerical evidence that
these models satisfy Assumption~\ref{a22}. To that end, we will show
from numerical computations that
\begin{align} \label{numbound}
D_N(x,y):= \sum_{i=0}^{N}\rho_{i\tau}d(\phi_{i\tau}(x),\phi_{i\tau}(y))\geq
\bar{H}\sum_{i=0}^{N}\rho_{i\tau} \,,  
\end{align}
for some $\bar{H}>0$ and $x,y$ are such that
$d(x,y)>b$, for some $b>0$.

\paragraph*{\textbf{Lorenz 63 model\cite{lorenz1963deterministic}}}

In this case, $X=\bb{R}^3$  $\phi_t(u)=[x^1_t(u),x^2_t(u),x^3_t(u)]^T$ with $\phi_0(u)=u$ 
\begin{align}\label{lor63} 
\begin{split}
\frac{d}{dt}x^1_t&=a(x^2_t-x^1_t)\\
\frac{d}{dt}x^2_t&=x^1_t\left(b-x^3_t\right)-x^2_t\\
\frac{d}{dt}x^3_t&= x^1_tx^2_t-cx^3_t,
\end{split}
\end{align}
where, we dropped the dependence of $u$. For $a=10,\;b=28$ and
$c=\frac{8}{3}$, it is known that the above model exhibits chaotic
behavior. 
\begin{figure}[t!]
  \centering
  \includegraphics[width=\linewidth , height=6cm]{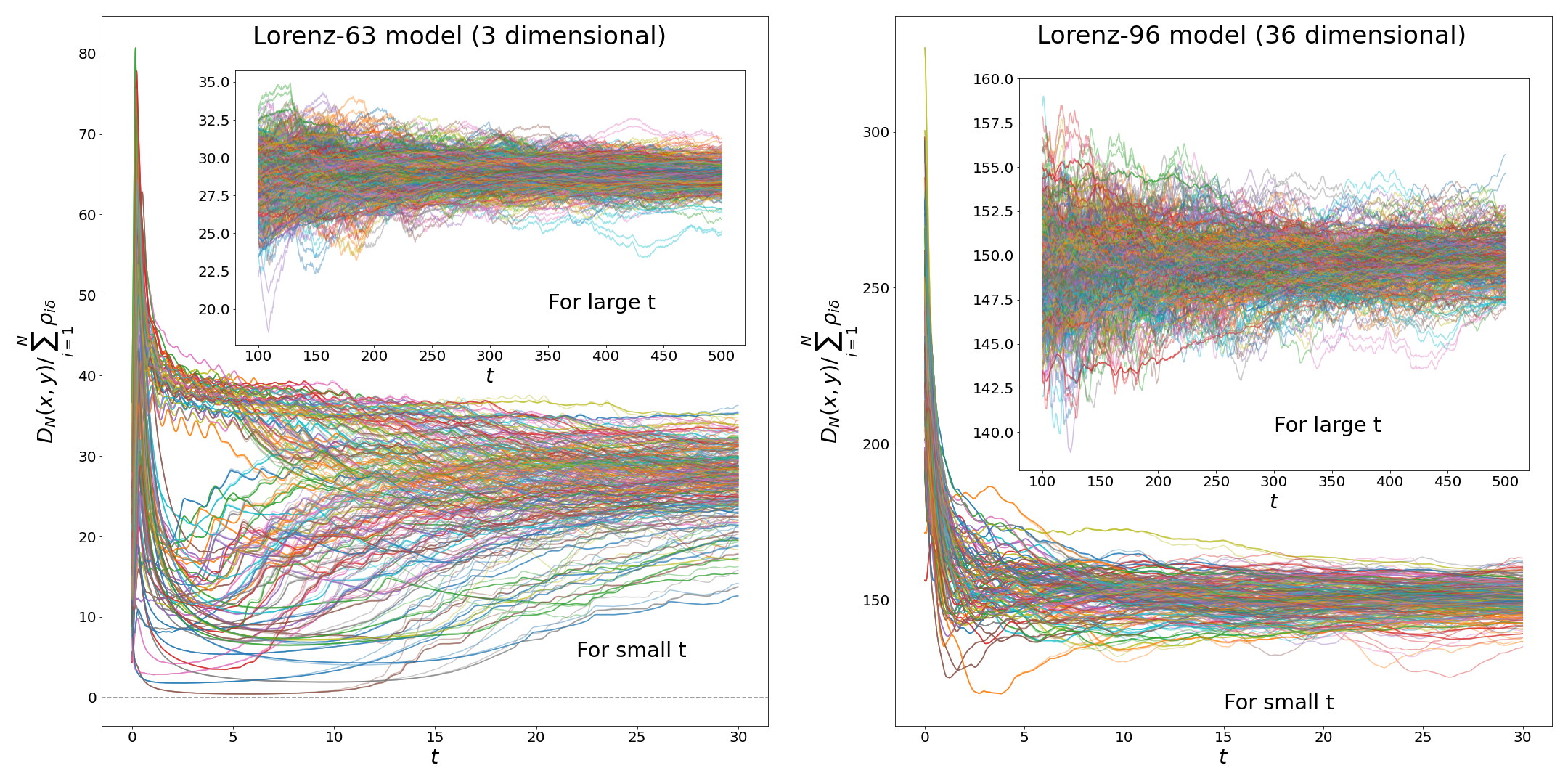}
  \caption{\editc{Dependence of
    $\frac{D_N(x,y)}{\sum_{i=0}^N \rho_{i\tau}}$ \emph{vs}
    $t=N\tau$ with $\tau = 0.01$ for $100$ samples. We have
    $t=N\tau$ with $\tau = 0.01$. We chose $100$ different pairs of $(x,y)$ for five different choices of $\rho_t=1000,\; t+1000,\; \log(t+1000); t^2+1000; t^3 + 1000$. The initial
    conditions for the samples are randomly chosen from uniform
    distribution on $[-10,10]^{p}$ where the dimension $p=3$ for Lorenz 63 model (left panel) and $p=36$ for the Lorenz 96 model (right panel). The insets show the plots for large $t$. (Note that the Lyapunov time scale for both these models is $O(1)$.)}}
    \label{fig:63}
\end{figure}

\paragraph*{\textbf{Lorenz 96 model\cite{lorenz1996predictability}}}
For this model, $X=\bb{R}^p$,
$\phi_t(u)=[x_t^1(u),x_t^2(u),...,x_t^p(u)]^T$ with
\begin{align*}
\frac{d}{dt}x_t^i&=(x_t^{i+1}-x_t^{i-2})x_t^{i-1}-x_t^{i} +F
\end{align*}
where, it is assumed that $x_t^{-1}=x_t^{p-1}$, $x_t^{0}=x_t^{p}$,
$x_t^{1}=x_t^{p+1}$ and we again dropped the dependence of $u$. For
$F=8$, this model is known to exhibit chaotic behavior. 

\editc{Note that for both the models (left and right panel of
  Figure~\ref{fig:63}), the plots with five vary different choices of
  $\rho_t=1000,\; t+1000,\; \log(t+1000),\; t^2+1000,\; t^3 + 1000$
  look very similar, and give a strong numerical evidence that indeed
  Equation~\eqref{numbound} is satisfied by both Lorenz 63 and Lorenz
  96 models. Providing an analytical proof of the validity of
  Assumption~\ref{a22} for these models or more generally for
  dynamical systems of the type given in~\eqref{lor1} is an
  interesting open question.}

\subsection{Qualitative understanding of Assumptions~\ref{a22} and~\ref{ad22}}\label{qual}
\editc{In the following we will argue that a system with sensitivity to initial conditions and a positive Lyapunov exponent satisfies the Assumptions~\ref{a22} and~\ref{ad22}}.
We
restrict ourselves to the discrete time setup and to that end, we
consider a bi-Lipshitz homeomorphism, $T:X\rightarrow X$. We will see
that the sensitive dependence and positiveness of Lyapunov exponent in
order to argue the validity of these assumptions.

To that end, we assume that $T:X\to X$ satisfies the following
properties:
\begin{enumerate}
\item Sensitivity to initial conditions: There exists $\delta>0$ such
  that for $x \in X$, $\forall \epsilon >0$, there exists a $\sigma$-null
  (zero volume) set $\mathcal{V}(x)$ such that for all
  $ y\in B_\epsilon (x)\backslash \mathcal{V}$, there is
  $n(x,y)\in \bb{N}$ such that
  $d(T^{n(x,y)}x,T^{n(x,y)}y)>\delta$. And for $y\in \mathcal{V}(x)$,
  $d(T^{n}x,T^ny)\to 0$ as $n\to \infty$. (Note that this is a
  stronger notion than the one given in \cite{glasner1993sensitive}).
\item Positive Lyapunov exponent: If
  $y\in B^c_r (x)\backslash \mathcal{V}$ then $d(T^i x,T^iy)> \delta$
  for $i\sim \frac{1}{\lambda}\log\frac{\delta}{r}$, where $\lambda>0$
  plays the role of Lyapunov exponent (Note that this property is
  qualitative in nature).
\end{enumerate}
We give an informal argument using these properties to show that
Assumption~\ref{ad22} holds. Choose $r>0$ and fix $x$ and $y$ such that
$d(x,y)>r$. And also, define $a_n=d(T^n x,T^n y)$. We assume that
$\inf_n( a_n)=0$, otherwise \eqref{aexp} trivially holds for a given
$x,y$ and of $T$ . And also, we assume that
$\limsup_{n\to\infty} a_n>0$.

Let $\{n_k\}_{k\in\mathbb{N}}$ be a subsequence such that 
\begin{align*}
\inf_{k} a_{n_k}>0 \text{ and } \lim_{k\to\infty}a_{m_k}=0 \text{ with }  \{m_k\}_{k\in\mathbb{N}}:= \mathbb{N}\backslash \{n_k\}_{k\in\mathbb{N}}.
\end{align*}
From the assumption that $\limsup_{n\to\infty} a_n>0$ and $\inf_n a_n=0$, such a $\{n_k\}_{k\in\mathbb{N}}$ exists. Suppose that $n_{k+1}-n_{k}\to \infty$ as $k\to
\infty$, then by choosing
$k$ becomes large enough, cardinality of the set
$\left[n_k,n_{k+1} \right]\cap \{m_k\}_{k\in \bb{N}}$ can be made as
larger than any desired integer.

In other words, for every $\rho>0$, $1<M\in \bb{N}$, there exists $k_0$
such that for all $k\geq k_0$, we have
\begin{align}\label{finest}
  n_{k+1}-n_k >M^2\text{ and } a_m<\rho, \; \forall n_{k}< m<n_{k+1}.
\end{align}

Choosing $M= \frac{1}{\lambda}\log\frac{\rho}{r}$, $\bar{x}:=T^{n_k +1}x$ and $\bar{y}:=T^{n_k +1}y$, we see
that this violates the assumptions on the dynamical system. To see that, we firstly note that for $i\sim \frac{1}{\lambda}\log\frac{\rho}{r}=M$, we have
$d(T^i\bar{x},T^i \bar{y})>\rho$.  Now for  $n_{k}< m<n_{k+1}$, consider
\begin{align*}
a_{m}=d(T^{m-n_k-1}\bar{x},T^{m-n_k-1} \bar{y})=d(T^{m-n_k-1}T^{n_k+1}x,T^{m-n_k-1} T^{n_k+1} y) = d(T^{m}x,T^{m} y).
\end{align*}  
From Equation~\eqref{finest}, $a_m< \rho$ and  since $d(T^{i+n_k+1}{x},T^{i+n_k+1} {y})>\rho$ with $i\sim M$, we have a contradiction. Therefore, the supposition that
$n_{k+1}-n_{k}\to \infty$ as $k\to \infty$ is false and there exist a
positive constant, $J$ such that $n_{k+1}-n_{k}\leq J$ for any
$k$. This implies that cardinality of the set
$\{n_k \}_{k\in \bb{N}} \cap [1,2,3,...,N]$ is at least
$\lfloor\frac{N}{J}\rfloor$. As a result, we have the following
\begin{align*}
  D^2_N(x,y) &\geq \delta \sum_{\substack{k\in \bb{N}, \\ n_k<N}}\rho_{n_k\tau}
  \geq \delta\sum_{i=0}^{\lfloor\frac{N}{J}\rfloor}\rho_{i\tau}\geq \delta G\left(\alpha, J\right)\sum_{i=0}^N\rho_{i\tau},
\end{align*}
where $G\left(\alpha, J\right)>0$ depends only on $\alpha$ and
$J$. The above inequalities follow from non-decreasing property of
$\rho_t$, applying the lowest bound to any sum up to first
$\lfloor\frac{N}{J}\rfloor$ terms of an subsequence of a
non-decreasing sequence and the form of $\rho_t$.

To summarize, in the current section, we studied various filtering
models that satisfy the assumptions of Sections~\ref{S3} and
~\ref{S4}.

\section{Conclusions} \label{S6}

The problem that we studied in this paper is the asymptotic stability
of the nonlinear filter with deterministic dynamics. In order to
establish stability, we first proved, in Theorem~\ref{th1}, an
accuracy or consistency result for the smoother, \emph{i.e.}, the
convergence of the conditional distribution of the initial condition
given observations. We used this result to prove the stability of the
filter in Theorem~\ref{stab}. Using essentially identical methods, we
also established the accuracy of the smoother (Theorem~\ref{thd1}) and
the stability of the filter (Theorem~\ref{stabd}) in the case of
discrete time.

The main assumptions used in proving these results are quite natural
as discussed in Section~\ref{sig}, and are indeed satisfied by two
large classes of dynamical systems, as discussed in
Section~\ref{S7}. In particular, these assumptions are valid for a class of
diffeomorphisms of compact manifolds with appropriate enough
observation function, as well as a  class of nonlinear
differential equations that includes models such as the Lorenz models
(using numerical evidence for Assumption~\ref{a22}).

There are various possible directions for further
studies. Theorems~\ref{stab} and~\ref{stabd} do not give any rate
of convergence, because of the use of Martingale convergence theorem,
and it would be interesting to find finer methods that may give the
rate of convergence, such as those \cite[Section
4.3]{bocquet2017degenerate} available for the convergence of
covariance of the filter for linear models. Further, partly because of
the use of convergence of the smoother to prove filter stability, our
results do not give much information about the structure of the
asymptotic filtering distribution, such as that which is available
\cite[Sections 4.3, 5]{bocquet2017degenerate}, \cite[Remark
3.2]{reddy2019asymptotic} for the linear filter. We hope that further
investigations in this direction will lead to efficient numerical
implementations of the filter for deterministic dynamics, especially
for high dimensional systems.

\section*{Acknowledgements}
The authors would like to thank Amarjit Budhiraja for valuable
discussions and pointing to the work of C{\'e}rou
\cite{cerou2000long}. The authors would also like to thank Chris Jones
and Erik Van Vleck for inputs, and The Statistical and Applied
Mathematical Sciences Institute (SAMSI), Durham, NC, USA where a part
of the work was completed. ASR’s visit to SAMSI was supported by
Infosys Foundation Excellence Program of ICTS. AA acknowledges support from US Office of Naval Research under grant N00014-18-1-2204. Authors acknowledge the
support of the Department of Atomic Energy, Government of India, under
projects no.12-R\&D-TFR-5.10-1100, and no.RTI4001. The authors also thank the anonymous referees for their thoughtful comments that helped improve the manuscript.
 
\bibliographystyle{plainnat}
\bibliography{20210822-nonlin-filter-stability-aa}

\end{document}